\newcolumntype{L}{>{$}l<{$}}
\def\?[#1]{\textbf{[#1]}\marginpar{\Large{\textbf{??}}}}
\newtheorem{thm}{Theorem}
\newtheorem{prop}{Proposition}
\newtheorem{lem}[prop]{Lemma}
\newtheorem{cor}[prop]{Corollary}
\numberwithin{equation}{section}
\numberwithin{prop}{section}
\newtheorem*{que}{Question}
\theoremstyle{definition}
\renewcommand{\Re}{\mathop{\rm Re}\nolimits}
\renewcommand{\Im}{\mathop{\rm Im}\nolimits}
\DeclareMathOperator{\Vol}{Vol}
\DeclareMathOperator{\Res}{{\rm Res}}
\newcommand{\Dcal}{{\mathcal D}}
\newcommand{\Ocal}{{\mathcal O}}
\newcommand{\RR}{{\mathbb R}}
\newcommand{\CC}{{\mathbb C}}
\newcommand{\ZZ}{{\mathbb Z}}
\newcommand{\id}{{\rm id}}
\begin{document}
\title[Spectral asymptotics for kinetic Brownian motion]{Spectral asymptotics for kinetic Brownian motion on Riemannian manifolds}

\author{Qiuyu Ren}
\email{qiuyu\_ren@berkeley.edu}
\address{Department of Mathematics, Evans Hall, University of California,
Berkeley, CA 94720, USA}

\author{Zhongkai Tao}
\email{ztao@math.berkeley.edu}
\address{Department of Mathematics, Evans Hall, University of California,
Berkeley, CA 94720, USA}

\begin{abstract}
We prove the convergence of the spectrum of the generator of the kinetic Brownian motion to the spectrum of the base Laplacian for closed Riemannian manifolds. This generalizes recent work of Kolb--Weich--Wolf~\cite{Kolb2022} on constant curvature surfaces and of Ren--Tao \cite{ren2022spectral} on locally symmetric spaces. As an application, we prove a conjecture of Baudoin--Tardif \cite{baudoin2018hypocoercive} on the optimal convergence rate to the equilibrium.
\end{abstract}

\maketitle
\section{Introduction}
Let $(M,g)$ be a closed Riemannian manifold of dimension $n\geq 2$ and $SM=\{(x,v)\in TM: |v|_g=1\}$ be the unit tangent bundle. For any $p\in M$, the fiber $S_pM$ is a standard sphere, so there is a standard (positive) spherical Laplacian $\Delta_{S_pM}$ on $S_pM$. We then define the vertical Laplacian $\Delta_V$ on $SM$ by $(\Delta_Vf)|_{S_pM}:=\Delta_{S_pM}(f|_{S_pM})$ for every $p\in M$. Let $X$ be the generator of the geodesic flow on $SM$. From these two operators we construct the generator of the \textit{kinetic Brownian motion} on $SM$ (see below for motivation) as  
\begin{align}
    P_\gamma:=-\gamma X+c_n\gamma^2\Delta_{V},\qquad c_n=\frac{1}{n(n-1)},\quad \gamma>0.
\end{align}

We are interested in the spectrum of the operator $P_\gamma:D(P_\gamma)=\{u\in L^2(SM): P_\gamma u\in L^2\}\to L^2(SM)$, which we denote by $\sigma(P_\gamma)$. The operator $P_\gamma$ is hypoelliptic, hence it has discrete spectrum with finite multiplicity (see e.g. \cite[Proposition 2.1]{Kolb2022}).
The main result of this paper is 
\begin{thm}\label{thm:Conv}
Let $\Delta_M$ be the
(positive) Laplace--Beltrami operator on $M$, then we have
\begin{align}\label{spec_conv}
    \sigma(P_\gamma)\cap U\to \sigma(\Delta_M)\cap U, \quad \gamma\to\infty
\end{align}
uniformly on any bounded open set $U\Subset \mathbb{C}$, with the agreement of multiplicities. Moreover, for any $s\in\RR$,
\begin{align}\label{res_conv}
    \|(P_\gamma-\lambda)^{-1} - (\Delta_M-\lambda)^{-1}\|_{H^s\to  H^{s+1/4}}\to 0,\quad \gamma\to\infty
\end{align}
uniformly for $\lambda\in U\Subset \CC\setminus \sigma(\Delta_M)$.
\end{thm}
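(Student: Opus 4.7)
The plan is to use a Grushin (Schur--Feshbach) reduction to replace the spectral problem of $P_\gamma$ on $L^2(SM)$ by one for an explicit effective operator on $L^2(M)$, and then to show that this effective operator converges to $\lambda-\Delta_M$ as $\gamma\to\infty$. This follows the broad strategy of \cite{Kolb2022,ren2022spectral}, but without symmetry of the base the effective operator must be computed directly via semiclassical perturbation rather than by an explicit harmonic decomposition.

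Let $R_+\colon L^2(SM)\to L^2(M)$ denote the normalized fibre-integration operator and $R_-=R_+^*$ its adjoint, so that $R_+R_-=\id$ and $\Pi:=R_-R_+$ is the orthogonal projection onto $\ker\Delta_V$. Consider the Grushin problem
\begin{align*}
\mathcal{P}_\gamma(\lambda):=\begin{pmatrix} P_\gamma-\lambda & R_- \\ R_+ & 0 \end{pmatrix}\colon D(P_\gamma)\oplus L^2(M)\longrightarrow L^2(SM)\oplus L^2(M).
\end{align*}
Standard Grushin theory tells us that $\mathcal{P}_\gamma(\lambda)$ is invertible if and only if its effective Hamiltonian $E_{-+}(\lambda)$ (the lower-right block of the inverse) is invertible on $L^2(M)$, with algebraic multiplicities of eigenvalues of $P_\gamma$ matching the orders of zeros of $\det E_{-+}$; this is the mechanism for the multiplicity statement in \eqref{spec_conv}.

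To invert $\mathcal{P}_\gamma(\lambda)$ I would write $u=R_-g+u_1$ with $u_1\in\ker\Pi$ and split the first equation along $\Pi$ and $I-\Pi$. The key structural fact is that $X$ maps fibre-constants into degree-$1$ spherical harmonics, since $X(R_-\tilde g)=\langle\nabla\tilde g,v\rangle_g$; hence $R_+XR_-=0$. On $\ker\Pi$ one has $\Delta_V\ge n-1$, so for $\gamma$ large the operator $c_n\gamma^2\Delta_V-\gamma(I-\Pi)X(I-\Pi)-\lambda$ is invertible by Neumann series, with inverse $(c_n\gamma^2\Delta_V)^{-1}+O(\gamma^{-3})$. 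Substituting back and using $\Delta_V^{-1}=1/(n-1)$ on the degree-$1$ subspace yields
\begin{align*}
E_{-+}(\lambda)=\lambda+\frac{1}{c_n(n-1)}R_+X^2R_-+O(\gamma^{-1}).
\end{align*}
A direct local-coordinate computation, using that the spherical average of $v^iv^j$ on $S_xM$ equals $g^{ij}(x)/n$, identifies $R_+X^2R_-$ with $-\tfrac{1}{n}\Delta_M$; combined with the choice $c_n=1/(n(n-1))$ this gives $E_{-+}(\lambda)=\lambda-\Delta_M+O(\gamma^{-1})$ in operator norm. (The value of $c_n$ is precisely what makes the coefficient of $\Delta_M$ equal to one.)

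Spectral convergence \eqref{spec_conv} with multiplicities then follows from a Rouch\'e-type argument applied to the holomorphic operator family $\lambda\mapsto E_{-+}(\lambda)$, whose zeros converge locally uniformly to those of $\lambda-\Delta_M$. For the resolvent bound \eqref{res_conv} I would reconstruct $(P_\gamma-\lambda)^{-1}$ from the inverse of $\mathcal{P}_\gamma(\lambda)$ and compare it with $R_-(\Delta_M-\lambda)^{-1}R_+$; the error splits into the $O(\gamma^{-1})$ contribution from $E_{-+}-(\lambda-\Delta_M)$ and the $O(\gamma^{-2})$ contribution from $u_1$ (which lies in the part of $L^2(SM)$ where $\Delta_V\ge n-1$). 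The $H^s\to H^{s+1/4}$ improvement is a consequence of hypoelliptic regularity of Kohn--H\"ormander sum-of-squares type for $P_\gamma$, combined with the elliptic smoothing of $(\Delta_M-\lambda)^{-1}$. \textbf{The main obstacle} will be to establish these hypoelliptic estimates quantitatively and uniformly in $\gamma$, and to verify the Grushin expansion in the anisotropic Sobolev spaces on $SM$ needed to extract the sharp $1/4$ gain of derivatives.
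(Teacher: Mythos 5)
Your high-level strategy (a Grushin/Schur reduction whose effective operator converges to $\lambda-\Delta_M$) is indeed the one used in the paper, and your computation $R_+X^2R_-=-\tfrac1n\Delta_M$, together with the role of $c_n$, is correct. But there are two genuine gaps that the paper has to work hard to fill.

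First, the step ``$c_n\gamma^2\Delta_V-\gamma(I-\Pi)X(I-\Pi)-\lambda$ is invertible by Neumann series, with inverse $(c_n\gamma^2\Delta_V)^{-1}+O(\gamma^{-3})$'' is not correct in operator norm: $X$ contains the horizontal part $v^i\partial_{x^i}$ and is unbounded on $L^2(SM)$, while $(c_n\gamma^2\Delta_V)^{-1}$ gains nothing in the horizontal direction. Thus $(c_n\gamma^2\Delta_V)^{-1}\gamma X$ is not a small bounded operator, the Neumann series does not converge, and the claimed $O(\gamma^{-3})$ error only holds in a derivative-losing sense (e.g.\ $H^{s+1}\to H^s$). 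In the paper the corresponding invertibility (Lemma~\ref{inv_lem_2}) is obtained by an accretivity estimate $\Re((\tilde P_h-h^2\lambda)u,u)_{H^s}\gtrsim\|u\|_{H^s}^2$ on $V_{>0}^s$, followed by a cutoff argument for surjectivity; the approximation of the inverse by $(c_n\Pi^\perp\Delta_V\Pi^\perp)^{-1}$ appears only inside a composition $\Pi X(\cdots)^{-1}X\Pi$, where the outer $X$'s compensate the derivative loss.

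Second, and more importantly, your proposal has no mechanism for turning the effective operator into a finite determinant. As set up, $E_{-+}(\lambda)$ is an unbounded operator on $L^2(M)$ (it converges to $\lambda-\Delta_M$), so ``$\det E_{-+}$'' is not defined, and the multiplicity statement in~\eqref{spec_conv} does not come for free from a Rouch\'e argument. The paper's central device is the finite-rank absorbing potential $Q_A=A^2\Pi\mathbbm 1_{(\Delta_M\le A^2)}\Pi$, together with the quantitative invertibility bound $\|(P_\gamma-\lambda+Q_A)^{-1}\|_{H^s\to H^s}\lesssim A^{-1}$ for $\gamma>A\gg1$ (Lemma~\ref{inv_lem_1}); this is proved by showing that any would-be eigenfunction is localized both to low spherical harmonics and to low horizontal frequencies, and it is explicitly described as the key technical difficulty. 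Once this is known, one writes $P_\gamma-\lambda=(P_\gamma-\lambda+Q_A)(I-(P_\gamma-\lambda+Q_A)^{-1}Q_A)$ and studies the genuinely finite determinant $\det(I-(P_\gamma-\lambda+Q_A)^{-1}Q_A)$, whose zeros converge to those of $\det(I-(\Delta_M-\lambda+Q_A)^{-1}Q_A)$, i.e.\ to $\sigma(\Delta_M)$. Your proposal has no analogue of this step, and without it the convergence with multiplicities and, especially, the uniform resolvent bound~\eqref{res_conv} for $\lambda$ in a compact set avoiding $\sigma(\Delta_M)$ are not established. (You do correctly flag the uniform hypoelliptic estimate as an obstacle for the $1/4$ regularity gain; that is the content of Proposition~\ref{prop:unif_hypo}, but it is a smaller part of the proof than the quantitative localization just described.)
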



This generalizes the previous work of Kolb--Weich--Wolf ~\cite{KWW2019,Kolb2022} and Ren--Tao \cite{ren2022spectral} to general Riemannian manifolds. The convergence in \eqref{res_conv} is in fact quantitative, and we show in \eqref{eq:res_conv_quant} that the left hand side of \eqref{res_conv} is $\Ocal(\gamma^{-1/10})$. We have not, at this stage, attempted to find the optimal rate of convergence or optimal regularity improvement. This allows us to keep the proof short.

As an application, we prove the following convergence to equilibrium with optimal convergence rate conjectured in Baudoin--Tardif \cite{baudoin2018hypocoercive}.

\begin{thm}\label{thm:Equili}
Suppose in addition to Theorem \ref{thm:Conv} that $M$ is connected and the spectrum of $\Delta_M$ is given by $0=\lambda_0<\lambda_1\leq\lambda_2\leq\cdots$, then for any $0<\beta<\lambda_1$, there is $\gamma_0>0$ such that for any $\gamma>\gamma_0$, there exists $C_\gamma>0$ such that
\begin{align*}
    \left\|e^{-tP_\gamma}u-\frac{1}{\Vol_g(SM)}\int_{SM}u\,  {\rm d Vol}_g\right\|_{L^2}\leq C_\gamma e^{-\beta t}\|u\|_{L^2},\quad t>0.
\end{align*}
\end{thm}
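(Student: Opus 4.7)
The plan is to reduce Theorem~\ref{thm:Equili} to a spectral gap for $P_\gamma$ on mean-zero functions and then invoke the spectral mapping theorem for compact semigroups. Since $P_\gamma$ is hypoelliptic on the compact manifold $SM$, $e^{-tP_\gamma}$ is smoothing for $t>0$ and hence immediately compact on $L^2(SM)$. For such semigroups the spectral mapping theorem $\sigma(e^{-tP_\gamma})\setminus\{0\}=e^{-t\sigma(P_\gamma)}$ holds, so the growth bound of the restricted semigroup $e^{-tP_\gamma}|_{\ker \Pi_0}$ equals $-\inf\Re(\sigma(P_\gamma)\setminus\{0\})$. It therefore suffices to identify $\Pi_0$ as the spectral projection of $P_\gamma$ at $0$ and to show that, for $\gamma$ large, $\sigma(P_\gamma)\setminus\{0\}\subset\{\Re z\geq\beta'\}$ for some $\beta<\beta'<\lambda_1$.

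For the kernel, $X$ is skew-adjoint (it preserves the Liouville measure) and $\Delta_V\geq 0$, so $\Re\langle P_\gamma u,u\rangle=c_n\gamma^2\langle\Delta_V u,u\rangle$. The equation $P_\gamma u=0$ forces $\Delta_V u=0$, i.e.\ $u(x,v)=f(x)$, and then $Xu=v\cdot\nabla f=0$ for every $(x,v)\in SM$, which makes $f$ constant by connectedness of $M$. The same applies to $P_\gamma^*=\gamma X+c_n\gamma^2\Delta_V$, so $0$ is a simple eigenvalue of $P_\gamma$ with spectral projection $\Pi_0 u=\tfrac{1}{\Vol_g(SM)}\int_{SM} u\,dVol_g$, and $e^{-tP_\gamma}u-\Pi_0 u=e^{-tP_\gamma}(I-\Pi_0)u$.

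For the spectral gap I would split the strip $\{\Re z\leq\beta'\}$ into a bounded window $\{|\Im z|\leq R\}$ and two tails. On the window, Theorem~\ref{thm:Conv}, together with the simplicity of $0\in\sigma(\Delta_M)$, yields for $\gamma$ large only the already-identified simple eigenvalue $0$ of $P_\gamma$. To exclude spectrum in the tails, I would use the spherical harmonic decomposition $L^2(SM)=\bigoplus_{k\geq 0}\Hcal_k$ in which $\Delta_V|_{\Hcal_k}=k(k+n-2)$ and $X$ splits as $X_++X_-$ with $X_\pm\colon\Hcal_k\to\Hcal_{k\pm 1}$. Any eigenfunction $u=\sum u_k$ of $P_\gamma$ at eigenvalue $\mu$ satisfies
\begin{align*}
    (c_n\gamma^2 k(k+n-2)-\mu)u_k=\gamma(X_+u_{k-1}+X_-u_{k+1}),\qquad k\geq 1,
\end{align*}
and an energy-type argument on this tridiagonal system confines all eigenvalues with $\Re\mu\leq\beta'$ to a $\gamma$-independent disk once $\gamma$ is large enough, placing us back in the window covered by Theorem~\ref{thm:Conv}.

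The main obstacle is exactly this uniform-in-$\gamma$ tail bound: Theorem~\ref{thm:Conv} by itself controls only bounded subsets of $\CC$, so one must independently prevent spectrum of $P_\gamma$ from escaping to $|\Im z|=\infty$ inside the critical strip $\{0\leq\Re z\leq\beta'\}$, and the existing hypocoercive bounds of Baudoin--Tardif provide only a suboptimal, $\gamma$-dependent gap that does not suffice. Modulo this subelliptic-type tail input, the remainder of the argument is a routine application of the spectral mapping theorem for compact semigroups.
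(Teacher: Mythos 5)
Your proposal is along a genuinely different route from the paper's, but leaves the critical step unresolved where you yourself flag it.

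The paper proves Theorem~\ref{thm:Equili} by writing $e^{-tP_\gamma}$ as an inverse Laplace transform of the resolvent and deforming the contour to the right past $\Re\lambda=\beta$; the contribution from the contour $\rho$ is controlled by the Eckmann--Hairer resolvent bound (Lemma~\ref{lem:e-h}, a consequence of the uniform hypoelliptic estimate~\eqref{eq:unif_hypo_iy}). This yields the full asymptotic expansion of Theorem~\ref{thm:Expan}, of which Theorem~\ref{thm:Equili} is a special case. Your alternative is to note that $e^{-tP_\gamma}$ is smoothing (hence compact) for $t>0$, so the spectral mapping theorem for eventually compact semigroups applies and the growth bound of the semigroup restricted to mean-zero functions equals $-\inf\Re(\sigma(P_\gamma)\setminus\{0\})$. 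That route is valid and avoids the contour deformation and the Eckmann--Hairer bound entirely; what it buys is a slightly cleaner path to the decay estimate alone, but it does not give the full expansion of Theorem~\ref{thm:Expan}. Your identification of $\ker P_\gamma$ and $\ker P_\gamma^*$ with constants, using the skew-adjointness of $X$ and connectedness of $M$, is correct.

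The genuine gap is the one you name but do not close: you need a uniform-in-$\gamma$ bound showing that $\sigma(P_\gamma)\cap\{\Re\lambda\leq\beta\}$ is contained in a fixed disk $\{|\lambda|\lesssim\beta\}$, so that Theorem~\ref{thm:Conv} (which controls only bounded windows) takes over. You correctly propose the tridiagonal energy argument in the spherical-harmonic decomposition, but then in your final paragraph you recant, asserting that this tail bound is the main obstacle and leaving the argument modulo it. In fact, the argument you sketch is precisely what works and is carried out in the paper's proof of Theorem~\ref{thm:Expan}: pairing $P_\gamma u=\lambda u$ with $u$ in $L^2$ and $H^1$ gives $\|\Pi^\perp u\|_{H^s}\lesssim\sqrt{\beta}\,\gamma^{-1}\|u_0\|_{H^s}$ for $s=0,1$; projecting to $V_0$ gives $|\lambda|\,\|u_0\|_{L^2}\lesssim\sqrt{\beta}\,\|u_0\|_{H^1}$; projecting to $V_1$ gives $\|u_0\|_{H^1}\lesssim\sqrt{\beta}\,(1+\gamma^{-2}|\lambda|)\|u_0\|_{L^2}$; combining gives $|\lambda|\lesssim\beta(1+\gamma^{-2}|\lambda|)$ and hence $|\lambda|\lesssim\beta$ for $\gamma^2\gg\beta$. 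The bounds you cite from Baudoin--Tardif are irrelevant here; the correct input is this spherical-harmonic energy estimate, and it is $\gamma$-independent exactly as required. Your proof should include that computation rather than flag it as an open obstacle; as written the argument is incomplete.
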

In fact, we have a more precise asymptotic expansion, see Theorem \ref{thm:Expan}.

The operator $P_\gamma$ is the generator of a stochastic process called kinetic Brownian motion. This is a form of a Langevin equation where Brownian motion occurs only in the fiber variables. It was studied by several authors, including Franchi--Le Jan~\cite{FrLe07}, Grothaus--Stilgenbauer ~\cite{GS13}, Angst--Bailleul--Tardif~\cite{ABT15} and Li~\cite{Li16}. It was shown in \cite{ABT15} and \cite{Li16} that kinetic Brownian motion interpolates between the geodesic flow and Brownian motion on the base manifold. Figure \ref{fig:kbm} is a simulation of the kinetic Brownian motion on the flat torus projected to the base. One can see when $\gamma$ is small, it behaves like the geodesic flow; when $\gamma$ is large, it behaves like the Brownian motion on the base manifold.

\begin{figure} 
\centering
\subfigure{
\begin{minipage}[t]{0.32\textwidth} 
\includegraphics[width=1.2\textwidth]{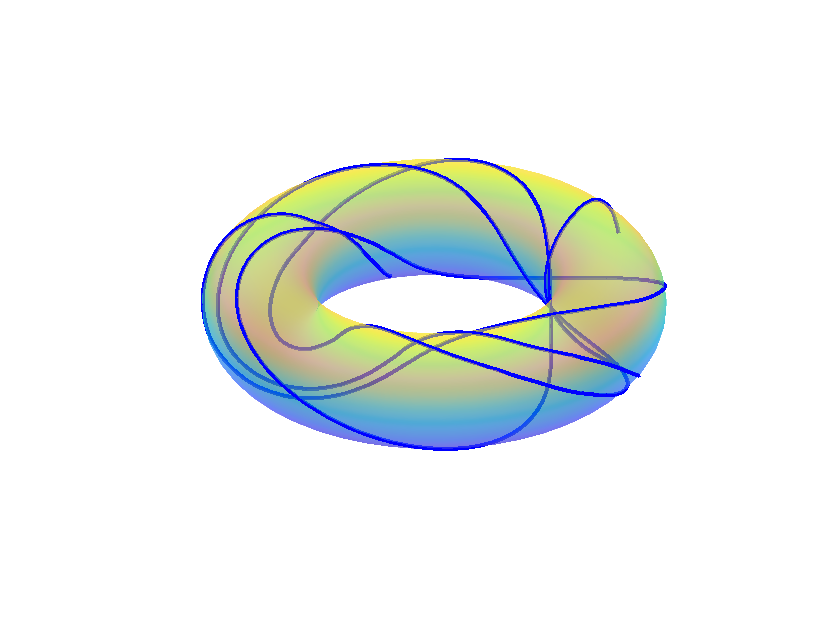} 
\end{minipage}}
\subfigure{\begin{minipage}[t]{0.32\textwidth} 
\includegraphics[width=1.2\textwidth]{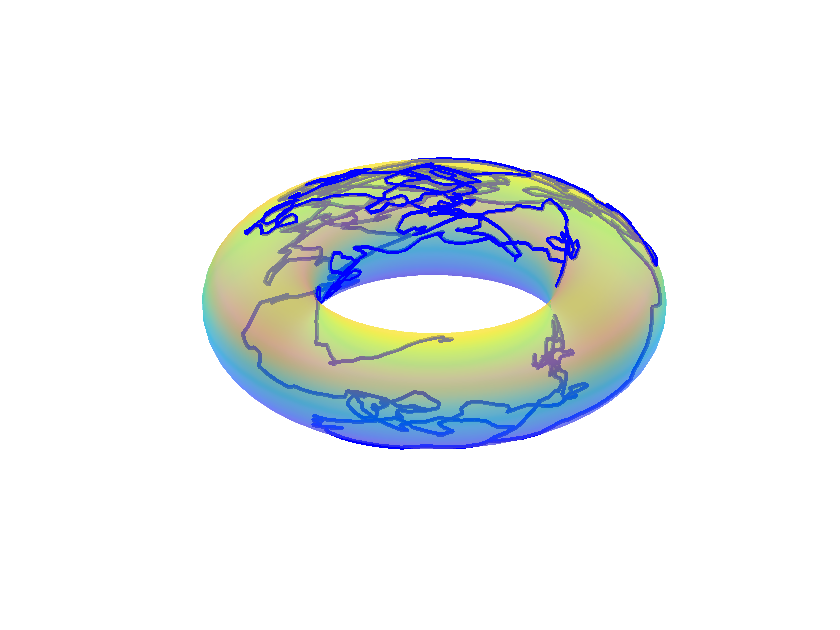} 
\end{minipage}}
\subfigure{\begin{minipage}[t]{0.32\textwidth} 
\includegraphics[width=1.2\textwidth]{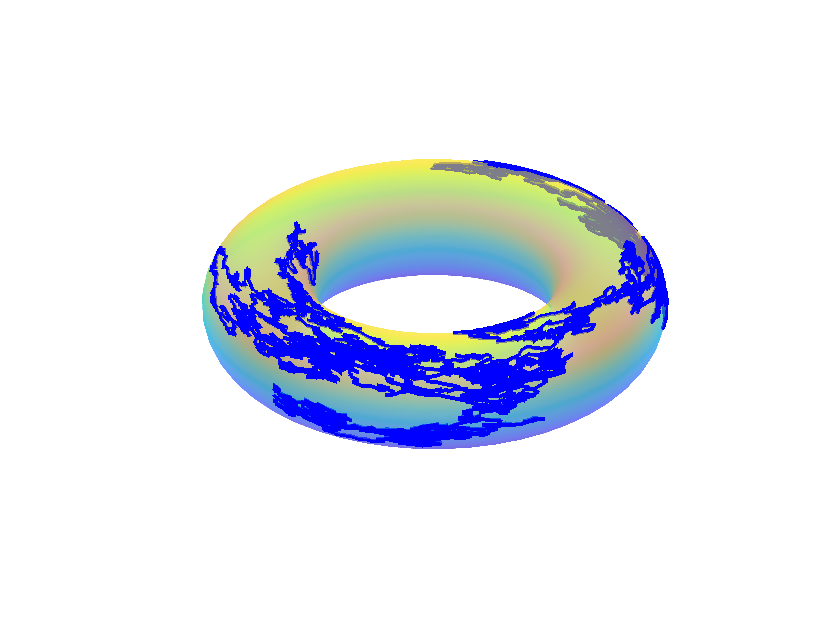} 
\end{minipage}}\caption{Simulation of kinetic Brownian motion for $\gamma=$ \href{https://math.berkeley.edu/~ztao/kbm1.mp4}{$10^{-2}$}, $\href{https://math.berkeley.edu/~ztao/kbm2.mp4}{10}$, $\href{https://math.berkeley.edu/~ztao/kbm3.mp4}{10^4}$ (click for \href{https://math.berkeley.edu/~ztao/kbm_movie.mp4}{movies}) on $S(\RR^2/2\pi\ZZ^2)$ projected to the base $\RR^2/2\pi\ZZ^2$.}
\label{fig:kbm}
\end{figure}

Our motivation also comes from Fried's conjecture, which relates special values of dynamical zeta functions to the Reidemeister torsion -- see Shen \cite{shen2021analytic} for a recent survey. Bismut \cite{bismut2005hypoelliptic} introduced the hypoelliptic Laplacian $L_\gamma$ on the cotangent bundle $T^*M$ as an interpolation between the geodesic flow and Laplacian on the base. Bismut--Lebeau \cite{BL08} proved that $L_\gamma$ converges to $\Delta_M$ in a certain strong sense for arbitrary closed manifolds. Our Theorem \ref{thm:Conv} should be compared with \cite[Theorem 17.21.5]{BL08}. We use a Grushin problem similar to that in \cite{BL08} but we do not use the sophisticated aspects of semiclassical analysis.
This makes our proof simpler and avoids the difficulties due to the fact that $P_\gamma$ is less functorial and does not enjoy good properties coming from the harmonic oscillator structure in the fibers as in \cite[Chapter 16]{BL08}. Bismut \cite{bismut2011hypoelliptic} also studied the limit $\gamma\to 0$ for a related hypoelliptic Laplacian on locally symmetric spaces and obtained formulas for orbit integrals. These lead to the proof of Fried's conjecture in the locally symmetric case in Shen \cite{shen2017analytic}.

One possible reason that it is hard to prove Fried's conjecture using hypoelliptic Laplacian is that we do not know a good convergence of hypoelliptic Laplacian to the geodesic vector field when $\gamma\to 0$ for general negatively curved manifolds. On the other hand, if we think of $P_\gamma$ as an analogue of hypoelliptic Laplacian on $SM$, Drouot \cite{Dr17} proved 
\begin{align*}
    \sigma(X+\gamma\Delta_V)\to \Res(X),\quad\gamma\to 0
\end{align*}
for negatively curved manifolds, where $\Res(X)$ is the spectrum of $X$ on certain anisotropic Sobolev spaces, called Pollicott--Ruelle resonances. Hence it is natural to ask about the limit of $P_\gamma$ as $\gamma\to \infty$. The first breakthrough was achieved by Kolb--Weich--Wolf \cite{KWW2019,Kolb2022}, who proved a weaker version of convergence for constant curvature surfaces. This approach was generalized in Ren--Tao \cite{ren2022spectral} to the case of locally symmetric spaces. We should stress that \cite{ren2022spectral} provides a strong convergence as stated in \eqref{spec_conv}, while \cite{Kolb2022} only proved convergence in  each Casimir eigenspace. The new ingredient in \cite{ren2022spectral} is a careful study of the localization of eigenfunctions in the Fourier space. In this paper, we implement the strategy to general Riemannian manifolds and prove similar localization results. This leads to the proof of Theorem \ref{thm:Conv}. 

The connection between kinetic Brownian motion and Fried's conjecture is still mysterious. We propose it as an open question.
\begin{que}
How is $P_\gamma$ related to the Reidemeister torsion?
\end{que}
A positive answer to this question would give us a new way to understand Fried's conjecture.

The paper is organized as follows. We prove Theorem \ref{thm:Conv} in section \ref{sec2}. This is done by introducing a finite rank absorbing potential $Q_A$ such that $P_\gamma-\lambda+Q_A$ is invertible. In this way, the problem is transformed to the spectrum of the finite rank operator $(P_\gamma-\lambda+Q_A)^{-1}Q_A$, and can be solved using a Grushin problem as in \cite[Section 3.4]{ren2022spectral}. The invertibility of $P_\gamma-\lambda+Q_A$ is proved in Lemma \ref{inv_lem_1} in a quantitative form, and is the key technical difficulty in this paper. Roughly speaking, we study the decomposition into spherical harmonics in the fiber variables and prove that 
eigenfunctions of $P_\gamma$ are localized to $0$-th spherical harmonics. Then we use projection to first order spherical harmonics to conclude eigenfunctions of $P_\gamma$ are also localized in the horizontal direction, hence compleltely localized in the Fourier space. Thus the potential $Q_A$ gives the invertibility of $P_\gamma-\lambda+Q_A$. The improvement of regularity is a corollary of a uniform hypoelliptic estimate (see Proposition \ref{prop:unif_hypo}) following \cite{radkevich1969theorem,kohn1973pseudo, hormander2007analysis}. As an application, we prove Theorem \ref{thm:Equili} in section \ref{sec3}. This is done by writing $e^{-tP_\gamma}$ as the inverse Mellin transform of the resolvent, and then deforming the contour. Our method only provides information on compact sets (or on vertical strips). In the faraway region, we use a result in Eckmann--Hairer \cite{eckmann2003spectral}, which is based on earlier work of H\'erau--Nier \cite{herau2004isotropic}, to obtain a spectral free region near infinity.


\subsection*{Acknowledgement}
We would like to thank Alexis Drouot for sharing with us his notes on kinetic Brownian motion which suggested the Grushin problem used here and in \cite{ren2022spectral}, and for helpful discussions. ZT would also like to thank Maciej Zworski for many helpful discussions and for his encouragement to publish this note. ZT was partially supported by
National Science Foundation under the grant DMS-1901462 and by Simons
Targeted Grant Award No. 896630.

\section{Convergence of spectrum}\label{sec2}
In this section, we prove Theorem \ref{thm:Conv}. We will first recall some important properties of $\Delta_V$ and $X$ studied in Ren--Tao \cite{ren2022spectral}. Then we prove the key invertibility lemmas: Lemma \ref{inv_lem_1} and Lemma \ref{inv_lem_2}, and use them to conclude Theorem \ref{thm:Conv}.

\subsection{Decomposition of $H^s(SM)$}\label{sec:decomp}
The result in this section is basically the same as \cite[Section 3.2]{ren2022spectral}, except we use more general $H^s$ spaces defined as $H^s(SM)=\{u\in \Dcal'(SM):(1+ \Delta)^{s/2}u\in L^2\}$. Here we have three different (positive) Laplacians: the total Laplacian $\Delta$, the horizonal Laplacian $\Delta_H$ and the vertical Laplacian $\Delta_V$.
\begin{itemize}
    \item The total Laplacian $\Delta$ is the Laplace–Beltrami operator associated to the Sasaki metric on $SM$.
    \item The vertical Laplacian $\Delta_V$ is defined as $(\Delta_Vf)|_{S_pM}:=\Delta_{S_pM}(f|_{S_pM})$ for every $p\in M$.
    \item The horizontal Laplacian $\Delta_H$ is defined as $\Delta_H=\Delta-\Delta_V$.
\end{itemize}
We recall from \cite[Theorem 1.5]{bergery1982laplacians} that $\Delta|_{L^2(M)}=\Delta_H|_{L^2(M)}=\Delta_M$ and $\Delta, \Delta_H,\Delta_V$ commute with each other.

Let $s\in\RR$. The total Laplacian $\Delta$ is a self-adjoint operator on $H^s(SM)$ with discrete spectrum. Since $\Delta_V$ commutes with the total Laplacian $\Delta$ on $SM$, we can do spectral decomposition on each eigenspace of $\Delta$. Thus we get the following orthogonal decomposition:
\begin{align}\label{decomposition}
    H^s(SM)=\bigoplus\limits_k V_k^s
\end{align}
where $V_{k}^s=\{u\in H^s(SM): \Delta_V u=k(k+n-2) u\}$ is the $k$-th eigenspace of $\Delta_V$.

Let $\Pi_{k}^s: H^s(SM)\to V_{k}^s$ denote the orthogonal projection with the abbreviated notation $\Pi=\Pi_{0}^s:H^s(SM)\to V_{0}^s$ and $\Pi^\perp=\id-\Pi: H^s(SM)\to V_{>0}^s$. 
The difficulty is that the geodesic vector field
$X$ does not commute with $\Delta_V$, but it satisfies the following properties from \cite[Lemma 3.2]{ren2022spectral}. We include the proofs for completeness.
\begin{lem}
\begin{itemize}
    \item $X$ is anti-self-adjoint with respect to the natural $L^2(SM)$ norm defined via the metric;
    \item $X$ sends $V_{k}$ into $V_{k+1}\oplus V_{k-1}$
with the convention that $V_{-1}=0$;
\item $n\Pi X^2\Pi=-\Delta_M$.
\end{itemize}
\end{lem}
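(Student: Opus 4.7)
The plan is to verify each item with a short direct computation, the second and third performed in normal coordinates at a base point $p\in M$.

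For anti-self-adjointness: the natural $L^2$ measure on $SM$ is the Liouville (equivalently, Sasaki) measure, and the geodesic flow preserves this measure, so $\operatorname{div}X=0$. For $u,v\in\CI(SM)$, integration by parts gives $\langle Xu,v\rangle+\langle u,Xv\rangle=\int_{SM}X(uv)\,d\Vol_g=0$, so $X^*=-X$ on the dense subspace $\CI(SM)$.

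For the shift property: fix $p\in M$, let $(x^i)$ be normal coordinates at $p$, and extend an orthonormal basis $(e_i)$ of $T_pM$ to a frame near $p$ by parallel transport along radial geodesics. This produces a trivialization $SM|_U\simeq U\times S^{n-1}$ in which the vertical Laplacian becomes the standard Laplacian on $S^{n-1}$, so an element $f\in V_k$ corresponds to a function $\tilde f(x,w)$ with $\tilde f(x,\cdot)\in\mathcal H_k(S^{n-1})$ for each $x\in U$. In these coordinates the geodesic vector field reads $X=v^i\partial_{x^i}-\Gamma^\ell_{ij}(x)v^iv^j\partial_{v^\ell}$, and at $x=p$ all Christoffel symbols vanish, so $Xf(p,v)=v^i\partial_{x^i}\tilde f(p,v)$. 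Since $\partial_{x^i}\tilde f(p,\cdot)$ is again a degree-$k$ spherical harmonic, the standard identity on $S^{n-1}$
\[v^i\cdot\mathcal H_k\subseteq\mathcal H_{k+1}\oplus\mathcal H_{k-1},\]
which follows from the unique decomposition $v^iY_k=H_{k+1}+|v|^2H_{k-1}$ of a degree-$(k+1)$ polynomial into harmonic pieces (harmonicity of $Y_k$ rules out lower-degree components), gives $Xf(p,\cdot)\in\mathcal H_{k-1}(S_pM)\oplus\mathcal H_{k+1}(S_pM)$; as $p$ is arbitrary, $Xf\in V_{k-1}\oplus V_{k+1}$.

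For the last identity: if $u\in V_0$ is (the pull-back of) a function on $M$, then $Xu(x,v)=\langle v,\nabla u(x)\rangle$, and the same normal-coordinate computation yields $X^2u(p,v)=v^iv^j\,\Hess u(e_i,e_j)(p)$, the vertical correction vanishing at $p$ (this is just $\frac{d^2}{dt^2}u(\gamma_v(t))|_{t=0}$). Averaging over $S_pM\simeq S^{n-1}$ using $\int_{S^{n-1}}v^iv^j\,d\sigma=\tfrac{1}{n}\Vol(S^{n-1})\,\delta_{ij}$ gives $(\Pi X^2u)(p)=\tfrac{1}{n}\Tr\Hess u(p)=-\tfrac{1}{n}\Delta_Mu(p)$. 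The main point to check, and the only non-automatic step, is the upgrade from pointwise to global in the second item: this is automatic because the decomposition \eqref{decomposition} is fiberwise and commutes with parallel transport, so the normal-coordinate identity at each $p$ determines $X$ as a map between the global spaces $V_k$.
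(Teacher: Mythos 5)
Your proof is correct and follows essentially the same route as the paper: anti-self-adjointness from Liouville's theorem, and normal-coordinate computations at a base point $p$ (where all Christoffel symbols vanish) for the shift property $V_k\to V_{k+1}\oplus V_{k-1}$ and the identity $n\Pi X^2\Pi=-\Delta_M$. The extra care you take with the radially parallel frame, which justifies that $\partial_{x^i}\tilde f(p,\cdot)$ is again a degree-$k$ spherical harmonic, fills in a step the paper leaves implicit but does not amount to a different strategy.
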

\begin{proof}
\begin{itemize}
    \item The fact $X$ is anti-self-adjoint on $L^2$ follows from the fact that $\exp(tX)$ is volume-preserving. This is essentially Liouville's theorem that geodesic flow preserves the volume.
    \item This is done by a computation in local coordinates. We choose normal coordinates $\{x^i\}$ at $p\in M$, so that $g_{ij}(p)=\delta_{ij}$ and $\partial_kg_{ij}(p)=0$. Then at $p$, $X=\sum v^j\partial_{x^j}$ where $v^j$'s are the induced coordinates on $TM$. The claim follows from the fact that multiplying spherical harmonics of degree $k$ by linear functionals gives a combination of spherical harmonics in degree $k-1$ and $k+1$.
    \item Again we compute in normal coordinates $\{x^i\}$ and the induced coordinates $\{v^i\}$ near the fiber over $p\in M$. The geodesic flow is given by
    \[X=\sum v^i \partial_{x^i}-\sum v^iv^j \Gamma_{ij}^k\partial_{v^k}=\sum(v^i\partial_{x^i}+\mathcal{O}(x)\partial_{v^i}),\]
    and $X^2=\sum v^iv^j\partial_{x^i}\partial_{x^j}+\sum\mathcal{O}(1)\partial_{v^k}+\mathcal{O}(x)$. Since $\partial_{v^k}\Pi=0$, it follows that at $p$, $\Pi X^2\Pi=\sum\Pi(v^iv^j)\partial_{x^i}\partial_{x^j}$.
    Here $\Pi(v^iv^j)$ is the $L^2$ orthogonal projection of $v^iv^j$ to constants. If $i\neq j$, then the projection is zero. If $i=j$, then the projection is given by
    \begin{align*}
       \frac{1}{\Vol(S^{n-1})}\int_{S^{n-1}}(v^j)^2d\sigma(v)=\frac{1}{n\Vol(S^{n-1})}\int_{S^{n-1}}\sum\limits_{i=1}^n(v^j)^2d\sigma(v)=\frac{1}{n}.
    \end{align*}
    Thus $\displaystyle\Pi X^2\Pi=\frac{1}{n}\sum\limits_{i=1}^n\partial^2_{x^i}=-\frac{1}{n}\Delta_M.$\qedhere
\end{itemize}
\end{proof}

\subsection{Invertibility lemmas}
In this section, we prove the crucial invertibility lemmas: Lemma \ref{inv_lem_1} and Lemma \ref{inv_lem_2}. In order to keep track of the dependence on the parameters, we use $A\lesssim_s B$ to mean $A\leq C(s)B$ with the implicit constant $C(s)$ depending on $s$. Similarly, $A\ll_s B$ means we choose $A\leq c(s)B$ for some sufficiently small $c(s)>0$ depending on $s$. Since everything will depend on the dimension $n$ and regularity $s$, we will often omit $s,n$ in the dependence to keep the notations simple.

We start by recalling the hypoelliptic estimate essentially from \cite[Theorem 6.3]{Sm20}.
\begin{lem}
For any $\gamma>0, s\in\RR, N\in\mathbb{N}$, $u\in C^\infty(SM)$ we have
\begin{align}\label{hyposm}
    \|Xu\|_{H^s}+\|\Delta_V u\|_{H^s}+\|u\|_{H^{s+2/3}}\leq C_{\gamma,s,N} (\|P_\gamma u\|_{H^s}+\|u\|_{H^{-N}}).
\end{align}
\end{lem}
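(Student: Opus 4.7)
The plan is to obtain this as a quantitative version of the classical Kohn--Hörmander hypoellipticity estimate for sum-of-squares type operators. The operator $P_\gamma = -\gamma X + c_n \gamma^2 \Delta_V$ has the structure of a drift term $-\gamma X$ together with a nonnegative second-order operator $c_n\gamma^2 \Delta_V$ that is elliptic only in the $(n-1)$-dimensional vertical directions. Hörmander's bracket condition is satisfied: at any point of $SM$, the tangent space is spanned by $X$, the vertical vector fields, and the commutators $[X,V]$ with $V$ vertical. Assigning weight $2$ to $X$ and weight $1$ to each vertical field, the maximal bracket weight needed to span $T(SM)$ is $3$, which predicts exactly the Sobolev gain of $2/3$ on the right-hand side.

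First I would establish the basic $L^2$ a priori estimate. Because $X$ is antisymmetric on $L^2(SM)$ (by the Liouville-type identity recalled above) and $\Delta_V \geq 0$ is self-adjoint, taking the real part of $\langle P_\gamma u, u\rangle$ yields
\begin{align*}
    c_n \gamma^2 \langle \Delta_V u, u\rangle = \Re\langle P_\gamma u, u\rangle \leq \|P_\gamma u\|_{L^2}\|u\|_{L^2},
\end{align*}
which controls the vertical first derivatives of $u$. Iterating with suitable powers of the elliptic weight $\Lambda := (1+\Delta)^{1/2}$, and absorbing the commutators $[\Lambda^s, X]$ and $[\Lambda^s, \Delta_V]$ (which are of lower order in the hypoelliptic sense), I would bootstrap this to an $H^s$ version controlling $\|\Delta_V u\|_{H^s}$, modulo error terms that will eventually be closed by the $2/3$-gain step or absorbed into $\|u\|_{H^{-N}}$.

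The crucial step is gaining $2/3$ regularity in the horizontal directions complementary to $X$. Locally I would choose vertical vector fields $V_1, \ldots, V_{n-1}$ framing each fiber and use that $\{X, V_1, \ldots, V_{n-1}, [X,V_1], \ldots, [X,V_{n-1}]\}$ spans $T(SM)$. For a commutator $Y = [X, V_j]$, the standard Kohn trick expands
\begin{align*}
    \langle \Lambda^{2s - 2/3} Yu, Yu\rangle = \langle \Lambda^{2s - 2/3}(XV_j - V_jX)u, Yu\rangle,
\end{align*}
and after moving $X$ across by antisymmetry and $V_j$ across by its skew-adjointness up to zeroth-order terms, each summand pairs one of $\|Xu\|_{H^s}$ or $\|V_j u\|_{H^s}$ (already controlled) against a factor bounded by $\|u\|_{H^{s+1/3}}$. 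Cauchy--Schwarz, interpolation, and absorption then yield the $2/3$ gain, and combining with the vertical and $X$-directional bounds gives $\|u\|_{H^{s+2/3}}$. The control of $\|Xu\|_{H^s}$ is recovered directly from the equation $-\gamma Xu = P_\gamma u - c_n \gamma^2 \Delta_V u$.

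The main obstacle is the pseudodifferential bookkeeping: the commutators $[\Lambda^s, X]$ and $[\Lambda^s, \Delta_V]$ are not zero, and on the curved manifold $SM$ they must be handled via standard pseudodifferential calculus rather than by explicit Fourier-multiplier manipulation. Arranging the absorption so that every residual loss can be folded into $\|u\|_{H^{-N}}$ is the technical core of the argument, and is precisely what is carried out in \cite{radkevich1969theorem,kohn1973pseudo,hormander2007analysis}. Since the constant $C_{\gamma,s,N}$ is allowed to depend on $\gamma$, no effort needs to be spent tracking the $\gamma$-dependence here; uniformity in $\gamma$ will be addressed in the separate uniform hypoelliptic estimate established later in the paper.
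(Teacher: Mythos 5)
The paper itself offers no proof of this lemma: it is explicitly ``recalled'' from Smith \cite[Theorem 6.3]{Sm20}, so there is no argument in the paper to compare against. Your proposal correctly identifies the bracket structure (assigning weight $2$ to $X$ and weight $1$ to the vertical frame makes $[X,V_j]$ have weight $3$, predicting a $2/3$ gain), and the overall Kohn--H\"ormander framework is the right setting. However, the step where you claim the $2/3$ gain falls out of the standard Kohn trick has a genuine arithmetic gap.

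Concretely, in $\langle \Lambda^{2s-2/3} Yu, Yu\rangle$ with $Y=[X,V_j]$, after moving $X$ or $V_j$ across, the conjugate factor is of the form (up to commutators) a pseudodifferential operator of order $2s-2/3$ hitting the \emph{second-order} expression $XYu$ or $V_jYu$; Cauchy--Schwarz then pairs $\|V_ju\|_{H^s}$ (or $\|Xu\|_{H^s}$) against $\|u\|_{H^{s+4/3}}$, not $\|u\|_{H^{s+1/3}}$ as you assert. Since $H^{s+4/3}$ lies strictly above the target $H^{s+2/3}$, it cannot be absorbed into the left-hand side, and interpolation does not rescue this. There is also a circularity: at this stage of the argument $\|Xu\|_{H^s}$ is not yet ``already controlled'' -- the a priori estimate only yields something like $\|Xu\|_{H^{s-1/2}}$ before the full estimate is assembled. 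The naive Kohn iteration, carried out carefully, gives a strictly smaller gain; this is exactly what the paper does in Proposition~\ref{prop:unif_hypo} (citing the same sources \cite{radkevich1969theorem,kohn1973pseudo,hormander2007analysis} you invoke), where the output is only a $1/4$ gain. The sharp $2/3$ exponent for a drift operator of Kramers/Kolmogorov type requires additional structure: a Fourier-multiplier analysis of the constant-coefficient model operator lifted to $SM$ by microlocal reduction, or the pseudodifferential argument carried out in \cite{Sm20}. Deferring the ``technical core'' to the general Radkevich/Kohn/H\"ormander machinery therefore does not close the proof of this particular statement at the stated exponent.
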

Since $\Delta$ commutes with $\Delta_V$, it is easy to see $C^\infty(SM)$ is dense in $D^s(P_\gamma)=\{u\in H^s(SM):P_\gamma u\in H^s(SM)\}$. Thus \eqref{hyposm} works for any $u\in D^s(P_\gamma)$. In particular, $(P_\gamma-\lambda)u\in C^\infty(SM)$ implies $u\in  C^\infty(SM)$. A basic accretive estimate shows $P_\gamma:D^s(P_\gamma)\to H^s(SM)$ is a Fredholm operator with index $0$.
\begin{lem}
For $\Re\lambda< 0$, $P_\gamma-\lambda$ is invertible on $L^2$. For $\Re\lambda<0$ sufficiently negative (depending on $s$ and $\gamma$), $P_\gamma-\lambda$ is invertible on $H^s$.
\end{lem}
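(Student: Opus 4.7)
For the $L^2$ statement, the plan is to exploit accretivity. Since $X$ is anti-self-adjoint and $\Delta_V\geq 0$ is self-adjoint, one has $\Re\langle P_\gamma u,u\rangle_{L^2}=c_n\gamma^2\langle\Delta_V u,u\rangle\geq 0$ for smooth $u$, and by density of $C^\infty(SM)$ in $D(P_\gamma)$ (noted just after \eqref{hyposm}) this extends to the full domain. For $\Re\lambda<0$, Cauchy--Schwarz then yields
\[\|(P_\gamma-\lambda)u\|_{L^2}\,\|u\|_{L^2}\geq \Re\langle(P_\gamma-\lambda)u,-u\rangle\geq -\Re\lambda\cdot\|u\|_{L^2}^2,\]
so $P_\gamma-\lambda$ is injective with quantitative lower bound. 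Since the paragraph preceding the lemma already records that $P_\gamma-\lambda$ is Fredholm of index $0$, injectivity alone forces invertibility on $L^2$.

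For the $H^s$ statement, I would conjugate by $\Lambda^s:=(1+\Delta)^{s/2}$, an isomorphism $H^s(SM)\to L^2(SM)$. The key observation is that $\Delta$ and $\Delta_V$ commute (recalled in Section \ref{sec:decomp}), so $[\Lambda^s,\Delta_V]=0$ and conjugation leaves the $\Delta_V$ term untouched:
\[\tilde P_\gamma\;:=\;\Lambda^s P_\gamma \Lambda^{-s}\;=\;P_\gamma+R_s,\qquad R_s:=-\gamma[\Lambda^s,X]\Lambda^{-s}.\]
By standard pseudo-differential calculus on the compact manifold $SM$ (noting that $\Lambda^s\in\Psi^s$ via Seeley's theorem and $X\in\Diff^1$), $[\Lambda^s,X]\in\Psi^{s}$, so $R_s\in\Psi^0$ is bounded on $L^2$ with some norm $C_{s,\gamma}$. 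Repeating the accretivity computation on $\tilde P_\gamma$ yields $\Re\langle\tilde P_\gamma u,u\rangle_{L^2}\geq -C_{s,\gamma}\|u\|^2$, so for $\Re\lambda<-C_{s,\gamma}$ the argument from the first paragraph applies to $\tilde P_\gamma-\lambda$: it is injective on $L^2$, hence invertible by Fredholm index $0$, and conjugating back gives invertibility of $P_\gamma-\lambda$ on $H^s$.

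The only technical point is the commutator estimate $\|R_s\|_{L^2\to L^2}<\infty$, which is routine pseudo-differential calculus but relies crucially on $[\Delta,\Delta_V]=0$: without this commutation, conjugation would produce a genuinely second-order perturbation of the accretive $\gamma^2\Delta_V$ term that accretivity could not absorb, and one would need a considerably more delicate argument. The adjoint computations implicit in the Fredholm step do not present new difficulties, since $P_\gamma^*=\gamma X+c_n\gamma^2\Delta_V$ is accretive by identical reasoning. This lemma plays a modest but essential role: it supplies a basepoint $\lambda$ at which the resolvent exists, which together with analytic Fredholm theory underlies the quantitative invertibility lemma and Grushin reduction carried out later.
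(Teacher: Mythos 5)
Your accretivity computation for the $L^2$ case is the same as the paper's, and your conjugation argument for the $H^s$ case (using $\Lambda^s=(1+\Delta)^{s/2}$ and the fact that $[\Lambda^s,\Delta_V]=0$, so only the first-order term $X$ produces a commutator, giving a zeroth-order perturbation $R_s$) is a correct and natural way to flesh out what the paper dismisses as ``similar.'' The one place you diverge from the paper, and where you should be a little careful, is the surjectivity step on $L^2$. You appeal to the sentence just before the lemma stating that $P_\gamma$ is Fredholm of index $0$ and conclude that injectivity implies invertibility. But the paper attributes that index-zero claim to ``a basic accretive estimate,'' which is precisely the content being proved here; read this way, your shortcut risks circularity. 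The paper's own proof avoids this: after getting injectivity and closed range from accretivity, it shows surjectivity directly by observing that any $v\perp\mathrm{Ran}(P_\gamma-\lambda)$ satisfies $(P_\gamma^*-\bar\lambda)v=0$ distributionally, is smooth by hypoellipticity, and then vanishes because $P_\gamma^*=\gamma X+c_n\gamma^2\Delta_V$ is equally accretive. You should either reproduce this adjoint argument (it is two lines given what you already have), or justify the index-zero claim independently of the lemma, e.g.\ by noting that hypoellipticity gives Fredholmness and that the involution $(x,v)\mapsto(x,-v)$ intertwines $P_\gamma$ and $P_\gamma^*$, forcing their common index to vanish. With that gap closed, your argument is correct, and your $H^s$ reduction is somewhat more explicit than the paper's.
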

\begin{proof}
We will only prove the claim for $L^2$. The proof for $H^s$ is similar. First we recall $\Re(Xu,u)_{L^2}=0$ since $X$ is anti-self-adjoint. Thus
\begin{align*}
    \Re((P_\gamma-\lambda)u,u)=c_n\gamma^2(\Delta_V u,u)-\Re\lambda \|u\|^2\geq -\Re\lambda \|u\|^2.
\end{align*}
For $\Re\lambda<0$, this shows $P_\gamma-\lambda:D(P_\gamma)\to L^2$ is injective and the image is closed. We claim it is also surjective. If there is $v\in L^2(SM)$ such that $((P_\gamma-\lambda)u,v)=0$, then distributionally
\begin{align*}
    (P_\gamma^*-\bar{\lambda})v=0.
\end{align*}
By hypoellipticity, $v\in C^\infty(SM)$. However,
\begin{align*}
    0=\Re((P_\gamma^*-\bar{\lambda})v,v)=\gamma^2(\Delta_V v,v)-\Re\lambda\|v\|^2\geq -\Re\lambda\|v\|^2
\end{align*}
implies $v=0$. So $P_\gamma$ must be surjective and thus invertible.
\end{proof}

When $\Re\lambda\geq 0$, it is possible that $P_\gamma-\lambda$ is not invertible. The following lemma essentially says that any such eigenfunction must be localized to finite frequency. In order to implement the heuristics, for $A>0$ we introduce $Q_A=A^2\Pi \mathbbm{1}_{(\Delta_M\leq A^2)}\Pi:H^s(SM)\to H^s(SM)$. This is a finite rank smoothing operator localized to finite frequencies. Here $\mathbbm{1}_{(\lambda\leq A^2)}$ is the characteristic function of the set $\{\lambda\leq A^2\}$ and $\mathbbm{1}_{(\Delta_M\leq A^2)}$ is the spectral projection to eigenspaces of $\Delta_M$ with eigenvalue $\leq A^2$, defined using functional calculus of self-adjoint operators.

\begin{lem}\label{inv_lem_1}
For any $C_0>0$, $s\in \RR$, there exists $C_1=C_1(C_0,n)>0$ such that for any $\gamma>C_1$, $A>C_1$ and $|\lambda|\leq C_0$, the operator \begin{align*}
    P_\gamma-\lambda+Q_A:D^s(P_\gamma)=\{u\in H^s(SM): P_\gamma u\in H^s\}\to H^s(SM)
\end{align*}
is invertible. For $\gamma>A\gg_{C_0,n,s}1$, the inverse has the bound
\begin{align}\label{eq:inv_bound_Hs}
    \|(P_\gamma-\lambda+Q_A)^{-1}\|_{H^{s}\to H^s}\lesssim_{C_0,n,s}A^{-1}.
\end{align}
\end{lem}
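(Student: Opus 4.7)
The plan is to prove the $L^2$ case first and then to bootstrap to $H^s$, using that $\Lambda^s := (1+\Delta)^{s/2}$ commutes with $\Delta_V$, with every projector $\Pi_k$, and with $Q_A$ (since $\Delta$ and $\Delta_V$ commute). Given $u \in D(P_\gamma)$ and $v := (P_\gamma - \lambda + Q_A)u$, the first step is to take the real part of $(v, u)_{L^2}$: anti-self-adjointness of $X$, non-negativity of $Q_A$, and the spectral gap $\Delta_V \geq n-1$ on $V_{>0}$ with $\Delta_V|_{V_0} = 0$ yield
\begin{align*}
\tfrac{\gamma^2}{n}\,\|\Pi^\perp u\|_{L^2}^2 \;\leq\; \|v\|_{L^2}\|u\|_{L^2} + C_0\,\|u\|_{L^2}^2,
\end{align*}
hence $\|\Pi^\perp u\|_{L^2} \lesssim \gamma^{-1}\bigl(\|v\|_{L^2}^{1/2}\|u\|_{L^2}^{1/2} + \|u\|_{L^2}\bigr)$: $u$ is already concentrated in $V_0$.

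For the second step, project $v$ onto $V_0$ and $V_1$. Since $X: V_k \to V_{k-1}\oplus V_{k+1}$ and $Q_A$ is $V_0$-valued, writing $u_k := \Pi_k u$,
\begin{align*}
\Pi v \;=\; -\gamma\,\Pi X u_1 + Q_A u_0 - \lambda u_0,\qquad
\Pi_1 v \;=\; -\gamma\,X u_0 - \gamma\,\Pi_1 X u_2 + \tfrac{\gamma^2}{n}\,u_1 - \lambda u_1.
\end{align*}
Solving the second identity for $X u_0$, applying $\Pi X$ to both sides, and invoking the key identity $n\,\Pi X^2\Pi = -\Delta_M$ yields an expression for $\Delta_M u_0$; eliminating $\Pi X u_1$ via the first identity then produces
\begin{align*}
(\Delta_M + Q_A - \lambda)\,u_0 \;=\; \Pi v + \tfrac{n}{\gamma}\,\Pi X \Pi_1 v + n\,\Pi X \Pi_1 X u_{\geq 2} + \Ocal(\gamma^{-2})\cdot\text{l.o.t.}
\end{align*}
By construction $\Delta_M + Q_A \geq A^2$ on $V_0$, so for $A \gg 1$ (depending on $C_0$) the operator $\Delta_M + Q_A - \lambda$ is invertible with $\|(\,\cdot\,)^{-1}\|_{H^{-1}(V_0)\to L^2(V_0)} \lesssim A^{-1}$. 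The first two right-hand terms are bounded in $H^{-1}(V_0)$ by $\|v\|_{L^2}$ (the second using anti-self-adjointness of $X$ together with $\|X\phi\|_{L^2(SM)} \lesssim \|\phi\|_{H^1(M)}$ for $\phi \in V_0$); the third is only bounded in $H^{-2}(V_0)$ by $\|u_{\geq 2}\|_{L^2}$, whose contribution is absorbed via the weaker $\Ocal(1)$ bound of the inverse in the $H^{-2}\to L^2$ norm combined with the $\gamma^{-1}$ decay of $\|u_{\geq 2}\|_{L^2}$ from Step 1. Young's inequality in the regime $\gamma > A \gg 1$ then delivers $\|u\|_{L^2} \lesssim A^{-1}\|v\|_{L^2}$; injectivity is immediate, and surjectivity follows since $Q_A$ is a compact perturbation of the Fredholm index-$0$ operator $P_\gamma - \lambda$.

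To upgrade to $H^s$, I apply the same scheme to $\Lambda^s u$, using $(P_\gamma - \lambda + Q_A)\Lambda^s u = \Lambda^s v - \gamma[\Lambda^s, X]u$. The commutator $[\Lambda^s, X]$ is a pseudodifferential operator of order $s$, contributing to Step 1 an extra term of size $\gamma\|u\|_{H^s}^2$; dividing by $\gamma^2$ still leaves $\|\Pi^\perp u\|_{H^s}^2 \lesssim \gamma^{-1}\|u\|_{H^s}^2 + \gamma^{-2}\|v\|_{H^s}\|u\|_{H^s}$, which is $o_\gamma(\|u\|_{H^s})$, and Step 2 goes through with the shifted inverse bound $\|(\,\cdot\,)^{-1}\|_{H^{s-1}(V_0) \to H^s(V_0)} \lesssim A^{-1}$. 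The main obstacle throughout is the coupling term $\Pi X \Pi_1 X u_{\geq 2}$: it contains two factors of $X$ and is not \emph{a priori} $L^2$-controlled by $\|u\|_{L^2}$. Handling it --- via the weaker $H^{-2}\to L^2$ inverse bound that absorbs two derivatives but gives up the $A^{-1}$ gain, combined with the $\gamma^{-1}$ bound on $\|u_{\geq 2}\|_{L^2}$ from Step 1 --- is the central technical point, and the reason the two localization steps must be executed in exactly this order.
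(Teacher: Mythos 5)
Your proposal is correct and reaches the same conclusion by a related but genuinely reorganized route, so it is worth spelling out the comparison. Both arguments rest on the same three pillars: the $L^2$ pairing giving $\|\Pi^\perp u\|_{L^2}\lesssim\gamma^{-1}(\dots)$, the projections of the equation onto $V_0$ and $V_1$ together with $n\Pi X^2\Pi=-\Delta_M$, and the positivity $\Delta_M+Q_A\geq A^2$ on $V_0$. Where you diverge: the paper establishes a \emph{second} localization estimate $\|\Pi^\perp u\|_{H^{s+1}}\lesssim\gamma^{-1}\|u_0\|_{H^{s+1}}$ by pairing with $(\Delta_H+1)u$ (more precisely by exploiting the orthogonality structure of $[\Lambda^{2s},X]$, which kills the $u_0$--$u_0$ block), uses it to absorb the coupling term $\Pi_1Xu_2$ via $\|Xu_2\|_{H^s}\lesssim\|u_2\|_{H^{s+1}}$, and then closes the quantitative $H^s$ bound by splitting $f$ into frequencies above/below $\gamma^2$. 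You skip the $H^{s+1}$ localization entirely: instead you eliminate $u_1$ from the two projected equations to produce the closed elliptic equation $(\Delta_M+Q_A-\lambda)u_0=\text{RHS}$, then exploit the \emph{graded} inverse bounds $\|(\Delta_M+Q_A-\lambda)^{-1}\|_{H^{s-1}\to H^s}\lesssim A^{-1}$ and $\|(\Delta_M+Q_A-\lambda)^{-1}\|_{H^{s-2}\to H^s}\lesssim 1$, using the weaker second bound precisely on the term $\Pi X\Pi_1 X u_{\geq 2}$ (losing the $A^{-1}$ gain but recovering it from the $\gamma^{-1/2}$-smallness of $u_{\geq 2}$ furnished by Step~1), which makes the paper's high/low frequency dichotomy in $f$ unnecessary. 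Your Step~1 commutator bound for $H^s$ is cruder than the paper's (you get $\|\Pi^\perp u\|_{H^s}\lesssim\gamma^{-1/2}\|u\|_{H^s}$ rather than $\gamma^{-1}(\|u_0\|_{H^s}+\|v\|_{H^s})$, because you do not use that $[\Lambda^s,X]$ maps $V_0$ into $V_1$), but this costs nothing since $\gamma^{-1/2}=o(1)$ still permits absorption, and the resulting threshold on $\gamma$ is independent of $A$. In short: the paper trades the two-Laplacian $H^1$ pairing and a frequency split of the source for your explicit elliptic reduction with graded Sobolev gains; your version arguably isolates more cleanly where each power of $A^{-1}$ and $\gamma^{-1}$ comes from, while the paper's avoids invoking the $H^{-2}\to L^2$ inverse bound.
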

\begin{proof}

Since $P_\gamma-\lambda+Q_A$ is hypoelliptic and Fredholm of index $0$, we only need to prove it has no kernel. Suppose by contradiction that for some $u\in H^s(SM)\setminus\{0\}$,
\begin{align}\label{inv_eqn}
    (P_\gamma-\lambda+Q_A)u=0.
\end{align}
Then $u\in C^\infty$ by hypoellipticity. Suppose $\|u\|_{L^2}=1$ and denote $u_k=\Pi_k u$. Pairing with $u$ gives
\begin{align}\label{inv_eqn_L2}
    c_n\gamma^2(\Delta_V u,u)-\gamma\Re(Xu,u)-\Re\lambda \|u\|_{L^2}^2+(Q_A u_0,u_0)=0.
\end{align}
Since $\Re(Xu,u)_{L^2}=0$, we get
\begin{align*}
    \|\Pi^\perp u\|_{L^2}\lesssim_{C_0} \gamma^{-1}.
\end{align*}
Similaly pairing \eqref{inv_eqn} with $(\Delta_H+1) u$ gives
\begin{align*}
    c_n\gamma^2(\Delta_V(\Delta_H+1) u,u)-\gamma\Re (Xu, (\Delta_H+1) u)-\Re\lambda\|u\|_{H^1}^2+(Q_A u_0,(\Delta_H+1)u_0)=0. 
\end{align*}
Moreover, 
\begin{align*}
    2\Re(Xu,(\Delta_H+1) u)=([\Delta_H,X](\Pi^\perp u+u_0),\Pi^\perp u+u_0)\lesssim \|\Pi^\perp u\|_{H^1}^2+\|\Pi^\perp u\|_{H^1}\|u_0\|_{H^1}.
\end{align*}
Note $\|\Pi^\perp u\|_{H^1}\|u_0\|_{H^1}\leq \epsilon_n \gamma\|\Pi^\perp u\|_{H^1}^2+\gamma^{-1}\epsilon_n^{-1}\|u_0\|_{H^1}^2$, we conclude
\begin{align*}
    \|\Pi^\perp u\|_{H^1}\lesssim_{C_0} \gamma^{-1}\| u_0\|_{H^1}.
\end{align*}
We come back to \eqref{inv_eqn}. Projecting it to $V_1$ gives
\begin{align*}
    \frac{1}{n}\gamma^2 u_1-\gamma \Pi_1 X (u_0+u_2)=\lambda u_1.
\end{align*} 
Recall $\|u_1\|_{L^2}\lesssim_{C_0}\gamma^{-1}$, and
\begin{align*}
    \|Xu_0\|_{L^2}^2&=-(\Pi X^2\Pi u_0,u_0)=\frac{1}{n}(\Delta_M u_0,u_0)\gtrsim\|u_0\|_{H^1}^2-\|u_0\|_{L^2}^2,\\
    \|Xu_2\|_{L^2}^2&\lesssim \|u_2\|_{H^1}^2\lesssim_{C_0}\gamma^{-1} \|u_0\|_{H^1}^2. 
\end{align*}
We conclude $\|u_0\|_{H^1}\lesssim_{C_0} 1$. The key observation is that the constant in this estimate is independent of $A$.
On the other hand, \eqref{inv_eqn_L2} also implies $ \|A\mathbbm{1}_{(\Delta\leq A^2)}u_0\|_{L^2}\lesssim_{C_0} 1$.
Taking $A\gg_{C_0} 1$  gives a contradiction. This shows the invertibility of $P_\gamma-\lambda+Q_A$.

In order to get the bound for the inverse, let $f\in C^\infty$ and $u\in C^\infty$ such that
\begin{align}\label{inv_eqn_f}
    (P_\gamma-\lambda+Q_A)u=f.
\end{align}
Pairing with $u$ in $H^s$ gives
\begin{align}\label{inv_eqn_Hs}
    c_n\gamma^2(\Delta_V u,u)_{H^s}-\gamma\Re(Xu,u)_{H^s}-\Re\lambda\|u\|_{H^s}^2+(Q_Au,u)_{H^s}=(f,u)_{H^s}.
\end{align}
Since
\begin{align*}
    \Re(Xu,u)_{H^s}=\Re(Xu,(1+\Delta)^su)_{L^2}&=\frac{1}{2}([(1+\Delta)^s,X]u,u)_{L^2}\\
    &\lesssim \|\Pi^\perp u\|_{H^{s}}^2+\|\Pi^\perp u\|_{H^s}\|u_0\|_{H^s},
\end{align*}
we conclude as before
\begin{align*}
    \|\Pi^\perp u\|_{H^s}\lesssim_{C_0}\gamma^{-1}(\|u_0\|_{H^s}+\|f\|_{H^s}),\quad     \|A\mathbbm{1}_{(\Delta\leq A^2)} u_0\|_{H^s}\lesssim_{C_0}\|u_0\|_{H^s}+\|f\|_{H^s}.
\end{align*}


Now we look at the $\Pi_1$ component of \eqref{inv_eqn_f}, i.e.
\begin{align*}
        \frac{1}{n}\gamma^2 u_1-\gamma \Pi_1 X (u_0+u_2)=\lambda u_1+f_1.
\end{align*}
We conclude
\begin{equation}\label{eq: Pi_1 bound}
\begin{aligned}
    \|u_0\|_{H^s}&\lesssim\|Xu_0\|_{H^{s-1}}+\|u_0\|_{H^{s-1}}\\
    &\lesssim_{C_0} \gamma\|u_1\|_{H^{s-1}}+\gamma^{-1}\|f\|_{H^{s-1}}+\|u_2\|_{H^s}+\|u_0\|_{H^{s-1}}\\
    &\lesssim_{C_0}\|u_0\|_{H^{s-1}}+\|f\|_{H^{s-1}}+\gamma^{-1}\|f\|_{H^s}.
\end{aligned}
\end{equation}
Now we divide into two cases. 
\begin{itemize}
\item If $f=\mathbbm{1}_{(\Delta>\gamma^{2})}f$ is in high frequency, then \eqref{eq: Pi_1 bound} implies that
\begin{align*}
\|u_0\|_{H^s}&\lesssim_{C_0}\|\mathbbm{1}_{(\Delta\leq A^2)} u_0\|_{H^{s-1}}+\|\mathbbm{1}_{(\Delta>A^2)}u_0\|_{H^{s-1}}+\gamma^{-1}\|f\|_{H^s}\\
&\lesssim_{C_0} A^{-1}(\|u_0\|_{H^{s-1}}+\|f\|_{H^{s-1}})+A^{-1}\|u_0\|_{H^s}+\gamma^{-1}\|f\|_{H^s}\\
&\lesssim A^{-1}\|u_0\|_{H^s}+A^{-1}\|f\|_{H^s}.
\end{align*}

\item If $f=\mathbbm{1}_{(\Delta\leq \gamma^2)}f$ is in low frequency, then \eqref{eq: Pi_1 bound} with $s$ replaced by $s+1$ gives
\begin{align*}
\|u_0\|_{H^{s+1}}&\lesssim_{C_0}\|u_0\|_{H^s}+\|f\|_{H^s}+\gamma^{-1}\|f\|_{H^{s+1}}\\
&\lesssim \|u_0\|_{H^s}+\|f\|_{H^s}.
\end{align*}
On the other hand, we have
\begin{align*}
\|u_0\|_{H^{s+1}}\geq\|\mathbbm{1}_{(\Delta>A^2)}u_0\|_{H^{s+1}}\geq A\|\mathbbm{1}_{(\Delta>A^2)}u_0\|_{H^s},
\end{align*}
thus
\begin{align*}
\|u_0\|_{H^s}&\leq \|\mathbbm{1}_{(\Delta\leq A^2)}u_0\|_{H^s}+\|\mathbbm{1}_{(\Delta>A^2)}u_0\|_{H^s}\\
    &\lesssim_{C_0} A^{-1}(\|u_0\|_{H^s}+\|f\|_{H^s}).
\end{align*}

\end{itemize}
In both cases we have $\|u_0\|_{H^s}\lesssim_{C_0} A^{-1}\|f\|_{H^s}$ and we conclude 
\[\|u\|_{H^s}\lesssim_{C_0}\|u_0\|_{H^s}+\gamma^{-1}\|f\|_{H^s}\lesssim_{C_0}A^{-1}\|f\|_{H^s}. \qedhere\]
\end{proof}

In order to get the improvement of regularity in \eqref{res_conv}, we prove a uniform hypoelliptic estimate following \cite{radkevich1969theorem, kohn1973pseudo} and \cite[Theorem 22.2.1]{hormander2007analysis}.
\begin{prop}\label{prop:unif_hypo}
For $A,B>1$, $\gamma>A+B^2$, $y\in\RR$, there exists $C=C(n,s)$ independent of $A,B, \gamma, y$ such that 
\begin{align}\label{eq:unif_hypo_QA}
    \|u\|_{H^{s+1/4}}\leq CB^{-1}\|(P_\gamma+Q_A)u\|_{H^s}+CB\|u\|_{H^s}.
\end{align}
\begin{align}\label{eq:unif_hypo_iy}
    \|u\|_{H^{s+1/8}}\leq CB^{-1}\|(P_\gamma - iy) u\|_{H^s}+CB\|u\|_{H^s}.
\end{align}
\end{prop}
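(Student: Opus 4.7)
My plan is to follow the classical hypoelliptic estimate strategy of Radkevich–Kohn–Hörmander (as in \cite[Theorem 22.2.1]{hormander2007analysis}), adapted to track uniformity in the parameters $A, B, \gamma, y$. The underlying reason the estimate holds is that $\{X, \nabla_V\}$ is bracket-generating on $SM$: commuting $X$ with vertical vector fields produces horizontal vector fields, so $-X + c_n \gamma \Delta_V$ is hypoelliptic with a quantitative gain. The exponents $1/4$ and $1/8$ are sub-optimal but convenient; obtaining uniformity forces a loss compared to the $2/3$ of \eqref{hyposm}.

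\textbf{Step 1 (vertical control).} For $u \in C^\infty(SM)$, pair $f := (P_\gamma+Q_A) u$ against $\Lambda^{2s}u$ in $L^2$, where $\Lambda=(1+\Delta)^{1/2}$, and take the real part. Using anti-self-adjointness of $X$, self-adjointness of $\Delta_V$ and $Q_A$, one obtains
\begin{align*}
c_n\gamma^2 \|\nabla_V u\|_{H^s}^2 + \|Q_A^{1/2}u\|_{H^s}^2 \leq |\Re(f,u)_{H^s}| + |([\Lambda^{2s}, X]u, u)_{L^2}|.
\end{align*}
The commutator $[\Lambda^{2s},X]$ is pseudodifferential of order $2s$ (with $\gamma$-independent seminorms), so it is bounded by $C\|u\|_{H^s}^2$. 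Absorbing yields $\|\nabla_V u\|_{H^s} \leq C\gamma^{-1}(\|f\|_{H^s} + \|u\|_{H^s})$, which is the vertical-direction gain of one full derivative with a saving of $\gamma^{-1}$.

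\textbf{Step 2 (horizontal gain via brackets).} Apply a Kohn-type trick: for a pseudodifferential weight $\Psi$ of order $1/4$, write
\begin{align*}
\|\Psi u\|_{H^s}^2 \lesssim |(\Psi^2 u, u)_{H^s}|,
\end{align*}
and use the identity $\gamma X = c_n\gamma^2\Delta_V - P_\gamma$ together with the bracket $[\Delta_V, X] = Y$ (a first-order horizontal operator) to trade a factor of $\Psi^{1/2}$ for either a vertical derivative (controlled by Step 1) or an $X$ derivative weighted by $\gamma^{-1}$. The key interpolation is: vertical gain of $1$ plus horizontal gain of roughly $1/2$ from a single bracket, yielding a uniform gain of $1/4$ in the joint Sobolev scale after Cauchy–Schwarz. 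The factor $B$ is inserted at the Cauchy–Schwarz step as $2ab \leq B^{-2}a^2 + B^2 b^2$, which produces the asymmetric form of \eqref{eq:unif_hypo_QA}. The requirement $\gamma > A + B^2$ ensures the error terms coming from $Q_A$ (of norm $A^2$) and from inserting the weight $\Psi$ (producing factors up to $B^2$) are dominated by the leading $\gamma^2$ vertical term.

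\textbf{Step 3 (the $-iy$ case).} For \eqref{eq:unif_hypo_iy}, the term $-iyu$ contributes only to the imaginary part of pairings, so it does not interact favorably with the $\Re$ of Step 1. Writing $(P_\gamma - iy)u = f$ and applying Step 1 only yields $\gamma^2\|\nabla_V u\|^2 \leq |(f,u)| + |y|\cdot 0$ (the $iy$ part cancels in $\Re$), but to control horizontal derivatives we must now absorb a $y$-dependent imaginary contribution through an additional Cauchy–Schwarz. This halves the effective gain, producing $1/8$ instead of $1/4$. Uniformity in $y$ is automatic because $y$ only appears through $\Im$ terms which are bounded by $\|u\|_{H^s}^2$ after absorption into the $B\|u\|_{H^s}$ term.

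The main obstacle will be the careful bookkeeping of pseudodifferential error terms: one must ensure the commutators $[\Psi, P_\gamma]$, $[\Lambda^{2s}, X]$, and $[\Lambda^{2s}, Q_A]$ all contribute constants independent of $\gamma, A, B, y$, and that each error admits absorption on the correct side of the inequality under the constraint $\gamma > A+B^2$. The weight $\Psi$ and the interpolation exponents must be chosen precisely so that the $\gamma^{-1}$ factor from Step 1 upgrades the horizontal estimate into a uniform estimate rather than one that degenerates as $\gamma \to \infty$.
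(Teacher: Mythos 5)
Your Step 1 and the overall Kohn--Radkevich strategy (pair against $\Lambda^{2s}u$, exploit anti-self-adjointness of $X$, split via $2ab\le B^{-2}a^2+B^2b^2$, use $\gamma>A+B^2$ to absorb) agree with the paper. But the remaining steps contain two concrete errors that stop the argument from closing.

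First, $[\Delta_V,X]$ is \emph{not} a first-order operator. Writing $\Delta_V=-\sum_i X_i^2$ for a local vertical frame $(X_i)$, you get $[\Delta_V,X]=-\sum_i\bigl(X_i[X_i,X]+[X_i,X]X_i\bigr)$, which is second order. The proof must use the individual vector-field brackets $[X_i,X]$. In the $Q_A$ case the paper bounds three quantities: $\|X_iu\|_{H^s}$ (immediate from Step~1), then $\|Xu\|_{H^{s-1/2}}$ (by writing $\gamma X=c_n\gamma^2\Delta_V+Q_A-(P_\gamma+Q_A)$ and pairing against a weight of order $0$), and finally $\|[X_i,X]u\|_{H^{s-3/4}}$ (pairing against a weight of order $-1/2$ and integrating by parts). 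Since $X_i, X, [X_i,X]$ span $T(SM)$ and the weakest control is at order $s-3/4$ on a first-order operator, the net gain is $1/4$. Your Step~2 lands on $1/4$ but does not pin down which operator lives at which Sobolev order, and it is precisely this bookkeeping that lets the constants be independent of $\gamma,A,B$.

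Second, your treatment of the $-iy$ term fails. The claim that $y$ ``only appears through $\Im$ terms which are bounded by $\|u\|_{H^s}^2$ after absorption'' is false: $|y|$ is unbounded and $|y|\,\|u\|_{H^s}^2$ cannot be absorbed into $B^2\|u\|_{H^s}^2$. Concretely, if you try to control $Xu$ by substituting $\gamma X=c_n\gamma^2\Delta_V-iy-(P_\gamma-iy)$, the $-iyu$ piece paired against a non-self-adjoint weight produces a real contribution of size $\gamma^{-1}|y|\,\|u\|_{H^s}^2$, which is not uniform in $y$. The paper sidesteps this by \emph{not} attempting to control $Xu$ in the $-iy$ case at all; it switches to the bracket system $X_i$, $[X_i,X]$, $[X_i,[X_i,X]]$, which still spans $T(SM)$ without ever isolating $X$. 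The extra commutator is what halves the gain to $1/8$ — not an extra Cauchy--Schwarz as you suggest.
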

\begin{proof}
It suffices to compute locally.
We will only give the proof of \eqref{eq:unif_hypo_QA}, but \eqref{eq:unif_hypo_iy} is proved in the same way using the fact that for a local basis $X_i$ of vertical vector fields, the vector fields $$X_i, [X_i,X], [X_i,[X_i, X]], i=1,2,\cdots,n-1$$ generate all directions. 

In order to get \eqref{eq:unif_hypo_QA}, since $X, X_i, [X_i,X], i=1,2,\cdots, n-1$ generate all directions, it suffices to bound $\|X_iu\|_{H^{s}}$, $\|Xu\|_{H^{s-1/2}}$ and $\|[X_i,X]u\|_{H^{s-3/4}}$ by the right hand side of \eqref{eq:unif_hypo_QA}. First, we have 
\begin{equation}\label{eq:unif_hypo_QA1}
     \begin{aligned}
     (Q_Au,u)_{H^s}+\gamma^2(\Delta_V u, u)_{H^s}&\lesssim \Re((P_\gamma+Q_A)u,u)_{H^s}+C\|u\|_{H^s}^2\\
    &\lesssim B^{-2}\|(P_\gamma+Q_A)u\|_{H^s}^2+B^2\|u\|_{H^s}^2.
       \end{aligned} 
\end{equation}

We will abbreviate pseudodifferential operators of order $k$ by $\Psi^k$ to simplify the notation. For $\|Xu\|_{H^{s-1/2}}$, we have
\begin{align*}
    \|Xu\|_{H^{s-1/2}}^2=(Xu,\Psi^0 u)_{H^s}=\gamma^{-1}((c_n\gamma^2\Delta_V+Q_A-(P_\gamma+Q_A))u,\Psi^0 u)_{H^s}.
\end{align*}
The first term is estimated as
\begin{align*}
    \gamma^{-1}(\gamma^2\Delta_V u,\Psi^0 u)_{H^s}&=\gamma^{-1}(\gamma \nabla^V u,\Psi^0\gamma \nabla^V u)_{H^s}+(\gamma\nabla^Vu,\Psi^0 u)_{H^s}\\
    &\lesssim B^{-2}\|(P_\gamma+Q_A)u\|_{H^s}^2+B^2\|u\|_{H^s}^2.
\end{align*}
The second term is estimated as
\begin{align*}
    \gamma^{-1}(Q_Au, \Psi^0 u)_{H^s}\lesssim \gamma^{-1}\|Q_Au\|_{H^s}\|u\|_{H^s}\lesssim B^{-2}\|(P_\gamma+Q_A)u\|_{H^s}^2+B^2\|u\|_{H^s}^2.
\end{align*}
The third term is estimated as
\begin{align*}
    \gamma^{-1}((P_\gamma+Q_A)u,\Psi^0 u)_{H^s}\lesssim \gamma^{-1}\|(P_\gamma+Q_A)u\|_{H^s}\|u\|_{H^s}\leq \gamma^{-1}\|(P_\gamma+Q_A)u\|_{H^s}^2+\gamma^{-1}\|u\|_{H^s}^2.
\end{align*}
Thus we conclude
\begin{align}\label{eq:unif_hypo_QA2}
    \|Xu\|_{H^{s-1/2}}\lesssim B^{-1}\|(P_\gamma+Q_A)u\|_{H^s}+B\|u\|_{H^s}.
\end{align}

For $\|[X_i,X]u\|_{H^{s-3/4}}$, we have 
\begin{align*}
&\|[X_i,X]u\|_{H^{s-3/4}}^2\\&=([X_i,X]u, \Psi^{-1/2} u)_{H^s}\\
&=(X_iXu,\Psi^{-1/2} u)_{H^s}-(X X_i u,\Psi^{-1/2} u)_{H^s}\\
&=-(X u, \Psi^{-1/2} X_i u)_{H^s}+(X u, \Psi^{-1/2} u)_{H^s}+ (X_i u, \Psi^{-1/2} X u)_{H^s}-(X_i u, \Psi^{-1/2} u)_{H^s}\\
&=\Re (X u, \Psi^{-1/2} X_i u)_{H^s} +\Re(X u, \Psi^{-1/2} u )_{H^s} +\Re(X_iu, \Psi^{-1/2}u)_{H^s}.
\end{align*}
The last term is estimated as
\begin{align*}
    \Re(X_i u, \Psi^{-1/2}u)_{H^s}\lesssim \gamma^{-1}\|(P_\gamma+Q_A) u\|_{H^s}^2+\gamma^{-1}\|u\|_{H^s}^2,
\end{align*}
The second term is estimated as
\begin{align*}
    &\Re(Xu, \Psi^{-1/2} u )_{H^s}
    \\&=\gamma^{-1}\Re((c_n\gamma^2\Delta_V+Q_A-(P_\gamma+Q_A) )u,\Psi^{-1/2}u)_{H^s}\\
    &\lesssim \gamma^{-1}\|\gamma\nabla^V u\|_{H^s}\|\gamma\nabla^V \Psi^{-1/2}u\|_{H^s}+\gamma^{-1}(Q_A u, \Psi^{-1/2}u)_{H^s}+ \gamma^{-1}\|(P_\gamma+Q_A)u\|_{H^s}^2+\gamma^{-1}\|u\|_{H^s}^2\\
    &\lesssim B^{-2}\|(P_\gamma+Q_A)u\|_{H^s}^2+B^2\|u\|_{H^s}^2.
\end{align*}
The first term is estimated as
\begin{align*}
    &\Re (X u, \Psi^{-1/2} X_i u)_{H^s}=\gamma^{-1}\Re((c_n\gamma^2\Delta_V+Q_A- (P_\gamma+Q_A) )u,\Psi^{-1/2}X_iu)_{H^s}\\
    &\lesssim \gamma^{-1}(\gamma^2\Delta_V u,u)_{H^s}^{1/2}(\gamma^2\Delta_V \Psi^{-1/2}X_i u,\Psi^{-1/2}X_iu)_{H^s}^{1/2}\\
    &+\gamma^{-1}(Q_A u, \Psi^{-1/2}X_i u)_{H^s} +\gamma^{-1}\|(P_\gamma+Q_A)u\|_{H^s}\|X_iu\|_{H^s}\\
    &\lesssim  B^{-2}\|(P_\gamma+Q_A)u\|_{H^s}^2+B^2\|u\|_{H^s}^2
\end{align*}
where we used $(\Delta_V u,v)_{H^s}\leq (\Delta_V u,u)_{H^s}^{1/2}(\Delta_V v,v)_{H^s}^{1/2}$ and
\begin{align*}
    &(\gamma^2\Delta_V \Psi^{-1/2}X_i u,\Psi^{-1/2}X_iu)_{H^s}\\
    &\lesssim \Re(P_\gamma\Psi^{-1/2}X_i u,\Psi^{-1/2}X_i u)_{H^s}+ C\|\Psi^{-1/2}X_i u\|_{H^s}^2\\
    &\lesssim \Re(\Psi^{-1/2}X_i P_\gamma u,\Psi^{-1/2}X_i u)_{H^s}+\Re((\gamma^2\Psi^{1/2}\nabla^V+\gamma \Psi^{1/2}) u,\Psi^{-1/2}X_iu)_{H^s}+C\|\Psi^{-1/2}X_i u\|_{H^s}^2\\
    &\lesssim B^{-2}\|P_\gamma u\|_{H^s}^2+B^2\|u\|_{H^s}^2\\
    &\lesssim B^{-2}\|(P_\gamma+Q_A) u\|_{H^s}^2+B^2\|u\|_{H^s}^2+\|Q_Au\|_{H^s}^2.
\end{align*}

Thus we conclude
\begin{align}\label{eq:unif_hypo_QA3}
    \|[X_i,X]u\|_{H^{s-3/4}}\lesssim B^{-1}\|(P_\gamma+Q_A)u\|_{H^s}+B\|u\|_{H^s}.
\end{align}

Combining \eqref{eq:unif_hypo_QA1}, \eqref{eq:unif_hypo_QA2}, \eqref{eq:unif_hypo_QA3}, we conclude \eqref{eq:unif_hypo_QA}.
\end{proof}

As a corollary, we can improve the regularity in \eqref{eq:inv_bound_Hs}.
\begin{cor}
In Lemma \ref{inv_lem_1}, we have
\begin{align}\label{eq:inv_bound_1/4}
    \|(P_\gamma-\lambda+Q_A)^{-1}\|_{H^s\to H^{s+1/4}}\lesssim_{C_0,n,s} A^{-1/2}.
\end{align}
\end{cor}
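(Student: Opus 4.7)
My plan is to combine the uniform hypoelliptic estimate \eqref{eq:unif_hypo_QA} with the $L^2$-type resolvent bound \eqref{eq:inv_bound_Hs} and then optimize the auxiliary parameter $B$. Concretely, given $f\in H^s(SM)$, set $u=(P_\gamma-\lambda+Q_A)^{-1}f$, so that $(P_\gamma+Q_A)u=f+\lambda u$. Feeding this into \eqref{eq:unif_hypo_QA} I obtain, for any $B>1$ with $\gamma>A+B^2$,
\begin{align*}
    \|u\|_{H^{s+1/4}}&\leq CB^{-1}\|(P_\gamma+Q_A)u\|_{H^s}+CB\|u\|_{H^s}\\
    &\leq CB^{-1}\bigl(\|f\|_{H^s}+|\lambda|\,\|u\|_{H^s}\bigr)+CB\|u\|_{H^s}.
\end{align*}

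Next I would use \eqref{eq:inv_bound_Hs} to replace $\|u\|_{H^s}$ by $A^{-1}\|f\|_{H^s}$, with constants depending only on $C_0, n, s$. Together with $|\lambda|\leq C_0$ this gives
\begin{align*}
    \|u\|_{H^{s+1/4}}\lesssim_{C_0,n,s}\bigl(B^{-1}+BA^{-1}\bigr)\|f\|_{H^s}.
\end{align*}
Optimizing in $B$ by choosing $B=A^{1/2}$ (which is allowed since then $A+B^2=2A<\gamma$ as soon as $\gamma>2A$, which I can ensure by slightly strengthening the lower bound on $\gamma$ in Lemma \ref{inv_lem_1} or equivalently by taking $A$ a bit smaller, absorbing into the implicit constants) yields $B^{-1}+BA^{-1}=2A^{-1/2}$, which is exactly the claimed bound. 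A density argument, together with the hypoellipticity already used in Lemma \ref{inv_lem_1}, extends the estimate from $f\in C^\infty(SM)$ to general $f\in H^s(SM)$.

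I do not expect any serious obstacle: the proof is essentially just interpolation between the operator-norm bound $A^{-1}$ on $H^s$ and the gain of $1/4$ derivatives supplied by Proposition \ref{prop:unif_hypo}, the only subtlety being to track the compatibility condition $\gamma>A+B^2$ in the hypoelliptic estimate so that the choice $B=A^{1/2}$ is admissible. If needed, the hypothesis $\gamma>A$ in the quantitative part of Lemma \ref{inv_lem_1} can be tightened to $\gamma>2A$ without affecting any downstream application, since all subsequent arguments take $\gamma\to\infty$ with $A$ fixed or growing more slowly.
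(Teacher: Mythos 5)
Your proposal is correct and is essentially identical to the paper's proof: both apply Proposition \ref{prop:unif_hypo} with $B=A^{1/2}$ and absorb the $A^{1/2}\|u\|_{H^s}$ term using the $H^s\to H^s$ bound $A^{-1}$ from Lemma \ref{inv_lem_1}. The only difference is that you explicitly flag the compatibility condition $\gamma>A+B^2=2A$ (and how to accommodate it), a point the paper leaves implicit; this is a sensible clarification and not a substantive change.
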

\begin{proof}
We take $B=A^{1/2}$ in \eqref{eq:unif_hypo_QA}, then
\begin{align*}
    \|u\|_{H^{s+1/4}}&\lesssim_{C_0} A^{-1/2}\|(P_\gamma-\lambda+Q_A)u\|_{H^s}+A^{1/2}\|u\|_{H^s}\\
    &\lesssim_{C_0} A^{-1/2}\|(P_\gamma-\lambda+Q_A)u\|_{H^s}.\qedhere
\end{align*}
\end{proof}

We will also need the following invertibility lemma. We use the semiclassical notation $h=\gamma^{-1}$ and $\tilde{P}_h=c_n\Delta_V-hX$. Note that $P_\gamma=\gamma^2\tilde{P}_h$. 
\begin{lem}\label{inv_lem_2}
Let $s\in\RR$, $|\lambda|\leq C_0$, there exists $h_0=h_0(C_0,n,s)>0$ such that for $0<h<h_0$, the operator
\begin{align*}
    \Pi^\perp (\tilde{P}_h-h^2\lambda)\Pi^\perp:\{u\in V_{>0}^s: \Pi^\perp\tilde{P}_h u\in H^s\}\to V_{>0}^s
\end{align*} is invertible. The inverse has norm
\begin{align*}
    \|( \Pi^\perp (\tilde{P}_h-h^2\lambda)\Pi^\perp)^{-1}\|_{H^s\to H^s}\lesssim_{C_0,n,s}1.
\end{align*}
\end{lem}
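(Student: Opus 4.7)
The plan is to establish invertibility by an $H^s$ energy estimate that exploits the spectral gap $\Delta_V \geq n-1$ on $V_{>0}$. Because $\Delta_V$ commutes with the total Laplacian $\Delta$, the operator $(1+\Delta)^{s/2}$ preserves each eigenspace $V_k^s$; hence for $u \in V_{>0}^s$, writing $w = (1+\Delta)^{s/2}u \in V_{>0}^0$,
\[
c_n(\Delta_V u, u)_{H^s} = c_n(\Delta_V w, w)_{L^2} \geq c_n(n-1)\|w\|_{L^2}^2 = \frac{1}{n}\|u\|_{H^s}^2,
\]
since $k(k+n-2) \geq n-1$ for $k \geq 1$. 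Moreover $\Pi^\perp$ commutes with $(1+\Delta)^{s/2}$ and acts as the identity on $V_{>0}^s$, so the bracketing $\Pi^\perp \cdots \Pi^\perp$ is harmless for pairings against $u \in V_{>0}^s$.

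For the vector field term, anti-self-adjointness of $X$ on $L^2$ yields $2\Re(Xu,u)_{H^s} = ([(1+\Delta)^s, X]u, u)_{L^2}$, and since $[(1+\Delta)^s, X]$ is a pseudodifferential operator of order $2s$, this is bounded by $C(n,s)\|u\|_{H^s}^2$. Combining with the trivial bound on the $\lambda$ term we get, for $u$ in the domain,
\[
\Re\bigl(\Pi^\perp(\tilde P_h - h^2\lambda)\Pi^\perp u,\, u\bigr)_{H^s} \geq \left(\tfrac{1}{n} - C(n,s)\,h - C_0\, h^2\right)\|u\|_{H^s}^2.
\]
For $h$ below some threshold $h_0(C_0, n, s)$ the coefficient exceeds $\tfrac{1}{2n}$, and Cauchy--Schwarz delivers the a priori estimate $\|u\|_{H^s} \leq 2n\,\|\Pi^\perp(\tilde P_h - h^2\lambda)\Pi^\perp u\|_{H^s}$, which is already the quantitative bound claimed in the lemma.

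Injectivity and closed range of $\Pi^\perp(\tilde P_h - h^2\lambda)\Pi^\perp$ follow at once. For surjectivity onto $V_{>0}^s$, the $L^2$ adjoint is $\Pi^\perp(c_n\Delta_V + hX - h^2\bar\lambda)\Pi^\perp$ acting on $V_{>0}^{-s}$, and the coercivity argument runs identically for it (the $X$ bound only involves $|\Re(Xu,u)|$, which is insensitive to the sign of $X$, and the $\Delta_V$ gap works the same way in $H^{-s}$). Hence the adjoint is injective on $V_{>0}^{-s}$, so by duality the range of the original operator is dense in $V_{>0}^s$ and, being closed, equals all of $V_{>0}^s$. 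The only real technical point is recognizing $[(1+\Delta)^s, X]$ as a pseudodifferential operator of order $2s$, after which everything reduces to an elementary energy argument driven by the spectral gap $\Delta_V \geq n-1$ on $V_{>0}$; that gap is what makes the $O_{C_0,n,s}(1)$ bound possible and is the structural reason for the lemma.
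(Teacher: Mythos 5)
Your coercivity estimate is correct and is the same as the paper's: you use $c_n(\Delta_V u,u)_{H^s}\geq\frac1n\|u\|_{H^s}^2$ on $V_{>0}^s$ (correctly computed from the spectral gap $k(k+n-2)\geq n-1$ and $c_n=\frac1{n(n-1)}$), the identity $2\Re(Xu,u)_{H^s}=([(1+\Delta)^s,X]u,u)_{L^2}$ with the commutator of order $2s$, and absorb the $O(h)$ and $O(C_0h^2)$ errors; this gives injectivity, closed range, and the quantitative $O_{C_0,n,s}(1)$ bound exactly as claimed. The difference is in the surjectivity step: you pass to the $L^2$ adjoint on $V_{>0}^{-s}$ and assert that ``the coercivity argument runs identically,'' whereas the paper instead takes the $H^s$ adjoint (so the annihilator $v$ lives in $V_{>0}^s$) and, crucially, applies the coercivity only after regularizing to $v_\epsilon=\chi(\epsilon^2\Delta)v$, controlling the error by the uniform-in-$\epsilon$ bound $\|[X^*,\chi(\epsilon^2\Delta)]v\|_{H^s}\lesssim\|v\|_{H^s}$. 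This regularization is not cosmetic: an element $v$ of the kernel of the adjoint is a priori only in $H^{-s}$ (in your version) or in $H^s$ (in the paper's), and the expansion of $\Re(T^*v,v)$ into the $\Delta_V$, $X$, and $\lambda$ pieces does not directly make sense at that regularity because $\Delta_V v$ a priori drops two derivatives. Your argument would be complete if you either carried out the analogous $\chi(\epsilon^2\Delta)$ regularization for the $L^2$ adjoint, or established that smooth functions are dense in the graph norm of the adjoint's domain (which is what the ``runs identically'' claim tacitly assumes). So the route is a genuine minor variant --- $L^2$ duality versus $H^s$ adjoint --- but it elides the one real technical point of the paper's surjectivity proof; I would ask you to fill in the regularization/density step before calling it done.
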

\begin{proof}
For $u\in V_{>0}^s$, $h\ll_{C_0} 1$,
\begin{align*}
    \Re((\tilde{P}_h-h^2\lambda)u,u)_{H^s}=c_n(\Delta_V u,u)_{H^s}-h\Re(Xu,u)_{H^s}-h^2\Re\lambda\|u\|_{H^s}^2\gtrsim_{C_0} \|u\|_{H^s}^2.
\end{align*}
So $\Pi^\perp (\tilde{P}_h-h^2\lambda)\Pi^\perp$ is injective and has closed image. Suppose it is not surjective, then there exists a nonzero $v\in V_{>0}^s$ such that \begin{align*}
    (\Pi^\perp(\tilde{P}_h-h^2\lambda)u,v)_{H^s}=0,\quad\forall u\in C^\infty(SM)\cap V_{>0}.
\end{align*}
Thus $\Pi^\perp(\tilde{P}_h^*-h^2\bar{\lambda})v=0$ where the adjoint is taken in $H^s$.
Let $\chi\in C_0^\infty(\RR;[0,1])$ be a cutoff function such that $\chi=1$ near $0$ and $v_\epsilon=\chi(\epsilon^2\Delta)v$, then
\begin{align*}
\|v_\epsilon\|_{H^s}&\lesssim_{C_0}     \|\Pi^\perp(\tilde{P}_h^*-h^2\bar{\lambda})v_\epsilon\|_{H^s}\\
&\lesssim_{C_0} h\|[ X^*,\chi(\epsilon^2 \Delta)]v\|_{H^s}\\
&\lesssim_{C_0} h\|v\|_{H^s}.
\end{align*}
Let $h\ll_{C_0} 1$ and $\epsilon\ll 1$, we conclude $v=0$, a contradiction. Thus $\Pi^\perp (\tilde{P}_h-h^2\lambda)\Pi^\perp$ is also surjective and thus invertible.
\end{proof}

\subsection{Spectral convergence}
In this section we prove the convergence of the spectrum in Theorem~\ref{thm:Conv} by a Grushin problem following \cite{ren2022spectral}.

Let $i_0: V_{0}^s\to H^s(SM)$ be the inclusion. Intuitively, we want to consider the following Grushin problem for $P_\gamma-\lambda+Q_A$.
\begin{align*}
    \begin{pmatrix}
    P_\gamma-\lambda+Q_A &  \gamma i_0\\
    \gamma\Pi&0
    \end{pmatrix}:
    D^s(P_\gamma)  \oplus  V_{0}^s\to H^s(SM)\oplus V_{0}^s.
\end{align*}
However, it is not clear what the correct space is to set up the Grushin problem. Instead we will just directly write down a formula \eqref{P_gamma_inv_grushin} that works distributionally. Using same methods in \cite{ren2022spectral}, we can solve the equations
\begin{align}\label{equ: grushin}
    \left\{\begin{array}{ll}
    (P_\gamma-\lambda+Q_A) u+\gamma u_- =v,  \quad\quad  &(u,u_-)\in \Dcal'(SM)\oplus \Dcal'(M),  \\
    \gamma\Pi u=v_+,     & (v,v_+)\in \Dcal'(SM)\oplus \Dcal'(M).
    \end{array}\right.
\end{align}
The solution we get is
\begin{align}\label{sol:grushin}
    \left\{\begin{array}{cl}
    u&=(\Pi^\perp (P_\gamma-\lambda)\Pi^\perp)^{-1}\Pi^\perp (v+X v_+)+\gamma^{-1} v_+,    \\
    u_-&= \gamma^{-1}\Pi v+\gamma^{-2}(\lambda-Q_A) v_++\Pi X(\Pi^\perp (P_\gamma-\lambda)\Pi^\perp)^{-1}\Pi^\perp (v+Xv_+).
    \end{array}\right.
\end{align}

Now we write (at least formally)
\begin{align}\label{P_gamma_inv_grushin}
    (P_\gamma-\lambda+Q_A)^{-1}=E-E_+E_{-+}^{-1}E_-
\end{align}
where
\begin{align*}
    E&=(\Pi^\perp (P_\gamma-\lambda)\Pi^\perp)^{-1}\Pi^\perp,
    &E_+&=(\Pi^\perp (P_\gamma-\lambda)\Pi^\perp)^{-1}\Pi^\perp X+\gamma^{-1},\\
   E_-&=\gamma^{-1}\Pi+\Pi X(\Pi^\perp (P_\gamma-\lambda)\Pi^\perp)^{-1}\Pi^\perp,
   &E_{-+}&=\gamma^{-2}( \lambda +\Pi X(\Pi^\perp (\tilde{P}_h-h^2\lambda)\Pi^\perp)^{-1}X\Pi-Q_A).
\end{align*}
We need to justify that $E_{-+}$ is invertible, and the inverse has a good control. So let us look at the equation $\gamma^2 E_{-+}u=f$. Let $v=(\Pi^\perp (\tilde{P}_h-h^2\lambda)\Pi^\perp)^{-1}X\Pi u$, we have
\begin{align*}
   \lambda u +\Pi X v -Q_A u=f,\qquad \Pi^\perp(\tilde{P}_h-h^2\lambda)v=X u. 
\end{align*}
Thus $\gamma^2 E_{-+}u=f$ is equivalent to
\begin{align*}
    (P_\gamma-\lambda+Q_A)(u+hv)=-f.
\end{align*}
By Lemma \ref{inv_lem_1}, $P_\gamma-\lambda+Q_A$ is invertible, we conclude $E_{-+}$ is also invertible. So the formula \eqref{P_gamma_inv_grushin} makes sense distributionally for $\gamma>A\gg_{C_0,n,s} 1$ depending on the Sobolev regularity $s$. 

In order to apply \eqref{P_gamma_inv_grushin}, we write 
\begin{align*}
    P_\gamma-\lambda=P_\gamma-\lambda+Q_A-Q_A=(P_\gamma-\lambda+Q_A)(I-(P_\gamma-\lambda+Q_A)^{-1}Q_A).
\end{align*}
We claim
\begin{prop}\label{prop:conv_QA}
For $|\lambda|\leq C_0$, $\gamma>A\gg_{C_0,n,s,N}1$, we have
\begin{align*}
    \|(P_\gamma-\lambda+Q_A)^{-1}Q_A - (\Delta_M-\lambda+Q_A)^{-1}Q_A\|_{H^s\to H^{s+N}}\lesssim_{C_0,n,s,N}A^{N+2}\gamma^{-1},\\
    \|Q_A(P_\gamma-\lambda+Q_A)^{-1} - Q_A(\Delta_M-\lambda+Q_A)^{-1}\|_{H^s\to H^{s+N}}\lesssim_{C_0,n,s,N}A^{N+2}\gamma^{-1},
\end{align*}
for any $s\in\RR, N\geq 0$.
\end{prop}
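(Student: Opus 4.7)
The plan is to apply the Grushin formula \eqref{P_gamma_inv_grushin} for $(P_\gamma - \lambda + Q_A)^{-1}$ with the right-hand side $v = Q_A u$, exploiting that $Q_A$ maps into $V_0$. Since $\Pi^\perp Q_A = 0$, the term $E Q_A$ vanishes and $E_- Q_A = \gamma^{-1} Q_A$, so the formula collapses to $(P_\gamma - \lambda + Q_A)^{-1} Q_A = -\gamma^{-1} E_+ E_{-+}^{-1} Q_A$. Setting $W = (\Pi^\perp(\tilde{P}_h - h^2\lambda)\Pi^\perp)^{-1}$ and $W_0 = (c_n \Delta_V|_{V_{>0}})^{-1}$, the key algebraic observation is that $X\Pi$ takes values in $V_1$, where $c_n \Delta_V$ acts by the scalar $1/n$; hence $\Pi X W_0 X \Pi = n \Pi X^2 \Pi = -\Delta_M$. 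This identifies the leading order of $E_{-+}$ as $-\gamma^{-2}(\Delta_M - \lambda + Q_A)$ and hence the leading order of $(P_\gamma - \lambda + Q_A)^{-1}Q_A$ as $(\Delta_M - \lambda + Q_A)^{-1}Q_A$.

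To quantify the error, I would write $E_{-+} = -\gamma^{-2}(T - R)$ with $T := \Delta_M - \lambda + Q_A$ and $R := \Pi X (W - W_0) X \Pi$, obtaining
\[
(P_\gamma - \lambda + Q_A)^{-1} Q_A - T^{-1} Q_A = T^{-1} R (T - R)^{-1} Q_A + \gamma^{-1} W X (T - R)^{-1} Q_A.
\]
The resolvent identity $W - W_0 = h W (\Pi^\perp X \Pi^\perp + h\lambda) W_0$ shows that $R$ is of size $O(h)$ with a bounded derivative loss, and the second term is manifestly $O(\gamma^{-1})$. For the second inequality of the proposition I would apply the Grushin formula from the left: the same mechanism gives $Q_A E = 0$ and $Q_A E_+ v = \gamma^{-1} Q_A v$ for $v \in V_0$ (because $Q_A$ kills $V_{>0}$-valued outputs), so $Q_A (P_\gamma - \lambda + Q_A)^{-1} = -\gamma^{-1} Q_A E_{-+}^{-1} E_-$, and an analogous expansion produces the corresponding difference.

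The upgrade from $H^s$ to $H^{s+N}$ at cost $A^{N+2}$ relies on the fact that $T$ commutes with the spectral projection $\mathbbm{1}_{\Delta_M \leq A^2}$, so $T^{-1} Q_A$ has image in the finite-dimensional space $\mathcal{K}_A := V_0 \cap \{\Delta_M \leq A^2\}$, on which $\|\cdot\|_{H^{s+N}} \lesssim A^N \|\cdot\|_{H^s}$. Combined with $\|Q_A\|_{H^s \to H^s} \lesssim A^2$, this absorbs the polynomial derivative losses from $X$ and $W - W_0$ into a factor $A^{N+2}$. The main obstacle will be the careful bookkeeping of these losses: one iterates the Neumann series for $(T - R)^{-1}$ so that each non-leading term has its dominant part lying in $\mathcal{K}_A$, which controls the high Sobolev regularity at the expense of a harmless polynomial power of $A$.
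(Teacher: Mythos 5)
Your algebraic setup matches the paper's exactly: the Grushin formula with $E\,Q_A=0$ and $E_-Q_A=\gamma^{-1}Q_A$, the identity $\Pi X W_0 X\Pi=n\Pi X^2\Pi=-\Delta_M$ (with $W_0=(c_n\Delta_V|_{V_{>0}})^{-1}$), the decomposition $\gamma^2E_{-+}=-(T-R)$ with $T=\Delta_M-\lambda+Q_A$ and $R=\Pi X(W-W_0)X\Pi$, the resulting identity
\[
(P_\gamma-\lambda+Q_A)^{-1}Q_A-T^{-1}Q_A=T^{-1}R\,(T-R)^{-1}Q_A+\gamma^{-1}W\Pi^\perp X\,(T-R)^{-1}Q_A,
\]
and the resolvent identity $W-W_0=hW(\Pi^\perp X\Pi^\perp+h\lambda)W_0$ showing $R=O(h)$ with three derivatives lost. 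Up to this point you are reproducing the paper's computation.

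Where you diverge from the paper is in the crucial quantitative step: estimating $(T-R)^{-1}Q_A=-\gamma^{-2}E_{-+}^{-1}Q_A$ in $H^s\to H^{s+N}$. You propose a Neumann expansion $\sum_k(T^{-1}R)^kT^{-1}Q_A$, arguing that "each non-leading term has its dominant part lying in $\mathcal{K}_A$". That claim is not accurate and the scheme does not close as written. The $k=0$ term $T^{-1}Q_A$ does lie in $\mathcal{K}_A$, but already $RT^{-1}Q_A$ does not: $R$ involves $X=v^i\partial_{x^i}+\cdots$, whose coefficients are not of finite $\Delta_M$-spectral support, so $R$ does not map $\mathcal{K}_A$ into any space of bounded $\Delta_M$-frequency. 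Consequently for $k\geq 1$ the terms live in all of $V_0$ and you cannot gain arbitrary Sobolev regularity for free. If instead you try to track the derivative bookkeeping honestly, the $k$-th term must be estimated by factoring $T^{-1}Q_A:H^s\to H^{s+N+k}$ (cost $A^{N+k}$) followed by $(T^{-1}R)^k:H^{s+N+k}\to H^{s+N}$ (cost $\lesssim(C_kh)^k$). The Sobolev indices needed grow linearly in $k$, and the constants $C_k$ in the mapping bounds for $X$, $W$, $W_0$, $T^{-1}$ at regularity $s+N+k$ are not uniform in $k$; there is no a priori reason the series $\sum_k (C_k\, hA)^k$ converges, and certainly not with the uniform bound $\lesssim A^N$ required. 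Even ignoring that issue, you would only get $A^N/(1-ChA)$, which needs $\gamma\gtrsim CA$ rather than the stated $\gamma>A$.

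The paper sidesteps all of this with a short but essential observation that your proposal misses: the equation $\gamma^2E_{-+}u=Q_Af$ is \emph{equivalent}, after setting $v=(\Pi^\perp(\tilde P_h-h^2\lambda)\Pi^\perp)^{-1}X\Pi u$, to $(P_\gamma-\lambda+Q_A)(u+hv)=-Q_Af$. Since $u=\Pi(u+hv)$ and Lemma~\ref{inv_lem_1} already gives $\|(P_\gamma-\lambda+Q_A)^{-1}\|_{H^{s+N}\to H^{s+N}}\lesssim A^{-1}$, while $\|Q_Af\|_{H^{s+N}}\lesssim A^{N+2}\|f\|_{H^s}$, one gets $\|\gamma^{-2}E_{-+}^{-1}Q_A\|_{H^s\to H^{s+N}}\lesssim A^{N+1}$ in one step, with no iteration, no derivative bookkeeping across infinitely many Sobolev scales, and no constraint beyond $\gamma>A$. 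You should replace your Neumann-series step with this reduction to Lemma~\ref{inv_lem_1}; the rest of your argument then goes through and recovers the stated bound $A^{N+2}\gamma^{-1}$ by inserting the bound for $\gamma^{-2}E_{-+}^{-1}Q_A$ at regularity $H^{s+N+3}$ and paying $O(h)$ from $R$ and $O(A^{-2})$ from $T^{-1}$.
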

\begin{proof}
We will only prove the first one, but the second one is proved exactly the same way.

Note by \eqref{P_gamma_inv_grushin}, $ (P_\gamma-\lambda+Q_A)^{-1}Q_A=-\gamma^{-1}E_+E_{-+}^{-1}Q_A$. We first prove a bound for $\gamma^{-2} E_{-+}^{-1}Q_A$ in $H^s\to H^{s+N}$ for any $N\geq 0$. Let $\gamma^2 E_{-+} u=Q_Af$, then for $v=(\Pi^\perp (\tilde{P}_h-h^2\lambda)\Pi^\perp)^{-1}X\Pi u$ we have
\begin{align*}
    (P_\gamma-\lambda+Q_A)(u+hv)=-Q_Af.
\end{align*}
By Lemma \ref{inv_lem_1}, we conclude
\begin{align*}
    \|u\|_{H^{s+N}}\lesssim_{C_0}A^{-1}\|Q_A f\|_{H^{s+N}}\lesssim_{C_0}A^{N+1}\|f\|_{H^s}.
\end{align*}
Now we can estimate the difference
\begin{align*}
    &(P_\gamma-\lambda+Q_A)^{-1}Q_A- (\Delta_M-\lambda+Q_A)^{-1}Q_A=-\gamma^{-1}E_+E_{-+}^{-1}Q_A-(\Delta_M-\lambda+Q_A)^{-1}Q_A\\
    &=(-\gamma^{-2}E_{-+}^{-1}-(\Delta_M-\lambda+Q_A)^{-1})Q_A-\gamma^{-3}(\Pi^\perp(\tilde{P}_h-h^2\lambda)\Pi^\perp)^{-1}\Pi^\perp X E_{-+}^{-1}Q_A.
\end{align*}
For the second term,
\begin{align*}
    \|\gamma^{-3}(\Pi^\perp(\tilde{P}_h-h^2\lambda)\Pi^\perp)^{-1}\Pi^\perp X E_{-+}^{-1}Q_A\|_{H^s\to H^{s+N}}&\lesssim_{C_0}\|\gamma^{-3}E_{-+}^{-1}Q_A\|_{H^s\to H^{s+N+1}}\\
    &\lesssim_{C_0}A^{N+2}\gamma^{-1}.
\end{align*}
For the first term,
\begin{align*}
    &(-\gamma^{-2}E_{-+}^{-1}-(\Delta_M-\lambda+Q_A)^{-1})Q_A\\
    =\,& (\Delta_M-\lambda+Q_A)^{-1}(-\Delta_M-\Pi X (\Pi^\perp(\tilde{P}_h-h^2\lambda)\Pi^\perp)^{-1}X\Pi)\gamma^{-2}E_{-+}^{-1}Q_A\\
    =\,&(\Delta_M-\lambda+Q_A)^{-1}(\Pi X ((c_n\Pi^\perp\Delta_V\Pi^\perp)^{-1}-(\Pi^\perp(\tilde{P}_h-h^2\lambda)\Pi^\perp)^{-1})X\Pi)\gamma^{-2}E_{-+}^{-1}Q_A\\
    =\,&(\Delta_M-\lambda+Q_A)^{-1}(\Pi X ((c_n\Pi^\perp\Delta_V\Pi^\perp)^{-1}\Pi^\perp(-hX-h^2\lambda)\Pi^\perp\\
    &\qquad \qquad\qquad \qquad\qquad \qquad\qquad \qquad\qquad  (\Pi^\perp(\tilde{P}_h-h^2\lambda)\Pi^\perp)^{-1})X\Pi)\gamma^{-2}E_{-+}^{-1}Q_A.
\end{align*}
Note
\begin{align*}
    \|\gamma^{-2}E_{-+}^{-1}Q_A\|_{H^s\to H^{s+N+3}}\lesssim_{C_0}A^{N+4},\|X\|_{H^{s+N+3}\to H^{s+N+2}}\lesssim 1,
\end{align*}
\begin{align*}
    \|(\Pi^\perp(\tilde{P}_h-h^2\lambda)\Pi^\perp)^{-1}\|_{H^{s+N+2}\to H^{s+N+2}}\lesssim_{C_0} 1,
    \|hX+h^2\lambda\|_{H^{s+N+2}\to H^{s+N+1}}\lesssim_{C_0}h,
\end{align*}
\begin{align*}
  \| X (c_n\Pi^\perp\Delta_V\Pi^\perp)^{-1}\|_{H^{s+N+1}\to H^{s+N}}\lesssim 1,  \|(\Delta_M-\lambda+Q_A)^{-1}\|_{H^{s+N}\to H^{s+N}}\lesssim_{C_0} A^{-2}.
\end{align*}
We conclude
\begin{align*}
    \|(P_\gamma-\lambda+Q_A)^{-1}Q_A- (\Delta_M-\lambda+Q_A)^{-1}Q_A\|_{H^s\to H^{s+N}}\lesssim_{C_0}A^{N+2}\gamma^{-1}.
\end{align*}
\end{proof}
Now we are ready to prove Theorem \ref{thm:Conv}.
\begin{proof}[Proof of Theorem \ref{thm:Conv}]
We first show the spectrum convergence \eqref{spec_conv}.
It is direct to check $I-(P_\gamma-\lambda+Q_A)^{-1}Q_A$ is invertible on $L^2(SM)$ if and only if it is invertible on $D(P_\gamma)$. So for $U\Subset \CC$,
\begin{align*}
    \sigma(P_\gamma)\cap U&=\{\lambda\in U : I-(P_\gamma-\lambda+Q_A)^{-1}Q_A \text{ is not invertible on }L^2(SM)\}\\
    &=\text{ zeros of }\det( I-(P_\gamma-\lambda+Q_A)^{-1}Q_A ) \text{ in } U.
\end{align*}
By Proposition \ref{prop:conv_QA}, for fixed $A$, the determinant $\det( I-(P_\gamma-\lambda+Q_A)^{-1}Q_A )$ convergences to $\det( I-(\Delta_M-\lambda+Q_A)^{-1}Q_A )$ locally uniformly as $\gamma\to\infty$. So the zeros also converge to zeros of $\det( I-(\Delta_M-\lambda+Q_A)^{-1}Q_A )$, which are exactly eigenvalues of $\Delta_M$.

Now we prove the resolvent convergence \eqref{res_conv}. We will choose $A=\gamma^{1/5}\to\infty$ below. Let $u\in H^s$ with $u_{\rm low}=\mathbbm{1}_{(\Delta_M\leq 10 A^2)}\Pi u$ and $u_{\rm high}=u-u_{\rm low}$, we have
\begin{align*}
    \|(\Delta_M-\lambda)^{-1}u_{\rm high}\|_{H^{s+1/4}}\lesssim_{C_0} A^{-7/4}\|u_{\rm high}\|_{H^s}.
\end{align*}
We note \begin{align*}
    \|(I-Q_A(\Delta_M-\lambda+Q_A)^{-1})^{-1}\|_{H^s\to H^s}=\|I-Q_A(\Delta_M-\lambda)^{-1}\|_{H^s\to H^s}\lesssim_{C_U} 1+A^2
\end{align*}
where $C_U^{-1}$ is the distance between $\sigma(\Delta_M)$ and $U$. For $A^2\gamma^{-1}\ll_U A^{-2}$, we have \begin{align*}
    \|Q_A(P_\gamma-\lambda+Q_A)^{-1}-Q_A(\Delta_M-\lambda+Q_A)^{-1}\|_{H^s\to H^s}\ll \|(I-Q_A(\Delta_M-\lambda+Q_A)^{-1})^{-1}\|_{H^s\to H^s}^{-1}
\end{align*}
so that
\begin{align*}
    \|(I-Q_A(P_\gamma-\lambda+Q_A)^{-1})^{-1}\|_{H^s\to H^s}\lesssim_U A^2.
\end{align*}
Moreover,
\begin{align*}
    &\|((I-Q_A(P_\gamma-\lambda+Q_A)^{-1})^{-1}-(I-Q_A(\Delta_M-\lambda+Q_A)^{-1})^{-1})v\|_{H^s}\\
    &\leq \|(I-Q_A(P_\gamma-\lambda+Q_A)^{-1})^{-1} (Q_A(P_\gamma-\lambda+Q_A)^{-1} -Q_A(\Delta_M-\lambda+Q_A)^{-1}) \\
    &(I-Q_A(\Delta_M-\lambda+Q_A)^{-1})^{-1} v\|_{H^s}\\
    &\lesssim_U A^4\gamma^{-1}\|(I-Q_A(\Delta_M-\lambda+Q_A)^{-1})^{-1}v\|_{H^s}.
\end{align*}
Using \eqref{eq:inv_bound_1/4}, we conclude 
\begin{align*}
     \|(P_\gamma-\lambda)^{-1}u_{\rm high}\|_{H^{s+1/4}}&=\|(P_\gamma-\lambda+Q_A)^{-1}(I-Q_A(P_\gamma-\lambda+Q_A)^{-1})^{-1}u_{\rm high}\|_{H^{s+1/4}}\\
     &\lesssim_{C_0} A^{-1/2}\|(I-Q_A(P_\gamma-\lambda+Q_A)^{-1})^{-1}u_{\rm high}\|_{H^s}\\
     &\lesssim_U A^{-1/2}\|(I-Q_A(\Delta_M-\lambda+Q_A)^{-1})^{-1}u_{\rm high}\|_{H^s}\\
     &=A^{-1/2}\|u_{\rm high}\|_{H^s}.
\end{align*}
In the last step we use the fact $Q_A u_{\rm high}=0$.
Now we are left with the finite dimensional part $u_{\rm low}$ and by Proposition \ref{prop:conv_QA} we have
\begin{align*}
    &\|((P_\gamma-\lambda)^{-1}-(\Delta_M-\lambda)^{-1})u_{\rm low}\|_{H^{s+1/4}}\\
    &=\|((I-(P_\gamma-\lambda+Q_A)^{-1}Q_A)^{-1}(P_\gamma-\lambda+Q_A)^{-1}\\
   & -(I-(\Delta_M-\lambda+Q_A)^{-1}Q_A)^{-1}(\Delta_M-\lambda+Q_A)^{-1})\mathbbm{1}_{(\Delta_M\leq 10 A^2)}\Pi u_{\rm low}\|_{H^{s+1/4}}\\
    &\leq \|(I-(P_\gamma-\lambda+Q_A)^{-1}Q_A)^{-1}((P_\gamma-\lambda+Q_A)^{-1}-(\Delta_M-\lambda+Q_A)^{-1}) u_{\rm low}\|_{H^{s+1/4}}+\\
    &\|((I-(P_\gamma-\lambda+Q_A)^{-1}Q_A)^{-1}-(I-(\Delta_M-\lambda+Q_A)^{-1}Q_A)^{-1})(\Delta_M-\lambda+Q_A)^{-1} u_{\rm low}\|_{H^{s+1/4}}\\
    &\lesssim_U A^{2+1/4}\gamma^{-1}\|u_{\rm low}\|_{H^s}+A^{4+1/4}\gamma^{-1}\|u_{\rm low}\|_{H^s}.
\end{align*}
We conclude
\begin{align}\label{eq:res_conv_quant}
    \|(P_\gamma-\lambda)^{-1}-(\Delta_M-\lambda)^{-1}\|_{H^s\to H^{s+1/4}}\lesssim_U A^{-1/2}+A^{4+1/4}\gamma^{-1}\lesssim \gamma^{-1/10}.
\end{align}
This finishes the proof of \eqref{res_conv}.
\end{proof}

\section{Convergence to equilibrium}\label{sec3}
In this section we give the proof of Theorem \ref{thm:Equili}. In fact, we will prove the following more general Theorem \ref{thm:Expan}. If we take $\beta$ in Theorem \ref{thm:Expan} to be smaller than the first eigenvalue of $\Delta_M$, then there is only a single term coming from the zero eigenvalue of $P_\gamma$ in the expansion, and this gives Theorem \ref{thm:Equili}.

\begin{thm}\label{thm:Expan}
For any $\beta,\epsilon>0$ there exists $\gamma_0>0$ such that for $\gamma>\gamma_0$, $\sigma(P_\gamma)\cap \{\Re\lambda\leq\beta\}=\{\lambda_0=0,\lambda_1,\cdots,\lambda_r\}$ is finite, and there exists $C_\gamma>0$ such that
\begin{align*}
    \left\|e^{-tP_\gamma}u-\sum\limits_{j=0}^r\sum\limits_{l=0}^{m_j-1} \frac{(-t)^l}{l!}e^{-t\lambda_j} (P_\gamma-\lambda_j)^l\Pi_{\lambda_j}u\right\|_{L^2}\leq C_\gamma e^{-\beta t}\|u\|_{L^2},\quad t\geq 1,
\end{align*}
where $\Pi_{\lambda_j}$ is the spectral projector to the generalized eigenspace of $P_\gamma$ with eigenvalue $\lambda_j$. Moreover, for each $j$ there is $\lambda_j^0\in \sigma(\Delta_M)$ such that $|\lambda_j-\lambda_j^0|<\epsilon$.
\end{thm}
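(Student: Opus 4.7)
The plan is to express $e^{-tP_\gamma}u$ as an inverse Laplace transform of the resolvent, deform the contour past the finitely many eigenvalues of $P_\gamma$ in the strip $\{\Re\lambda\leq\beta\}$, and bound the resulting tail integral by $C_\gamma e^{-\beta t}\|u\|_{L^2}$. I would carry this out in three steps.

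First, I would locate the spectrum in the strip. In the bounded window $\{\Re\lambda\leq\beta+\delta,|\Im\lambda|\leq R\}$, Theorem \ref{thm:Conv} says that, for $\gamma>\gamma_0(R,\beta,\epsilon,\delta)$, the spectrum consists of finitely many eigenvalues each within $\epsilon$ of $\sigma(\Delta_M)$. To exclude spectrum in the tails $|\Im\lambda|\geq R$, I would invoke the spectrum-free region result of Eckmann--Hairer \cite{eckmann2003spectral} (based on H\'erau--Nier \cite{herau2004isotropic}): it gives $R_0=R_0(\gamma,\beta)$ such that $\sigma(P_\gamma)\cap\{\Re\lambda\leq\beta+\delta,\,|\Im\lambda|\geq R_0\}=\emptyset$ together with a polynomial resolvent bound $\|(P_\gamma-\lambda)^{-1}\|_{L^2\to L^2}\lesssim |\Im\lambda|^{-\alpha}$ on the line $\Re\lambda=\beta+\delta$ in that region. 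Combining the two yields the finite list $\lambda_0,\ldots,\lambda_r$ and the proximity assertion.

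Next I represent the semigroup. Since $-P_\gamma$ generates a contraction semigroup on $L^2$ (from $\Re(P_\gamma u,u)\geq 0$) and $e^{-P_\gamma}$ is smoothing by hypoelliptic regularization, for $t\geq 1$ I write $e^{-tP_\gamma}u = e^{-(t-1)P_\gamma}v$ with $v:=e^{-P_\gamma}u\in C^\infty(SM)$. Standard Laplace inversion then gives
\begin{align*}
    e^{-(t-1)P_\gamma}v = \frac{1}{2\pi i}\int_{c-i\infty}^{c+i\infty}e^{-(t-1)\nu}(P_\gamma-\nu)^{-1}v\,d\nu,\qquad c<0,
\end{align*}
valid for smooth $v$. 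I deform the vertical contour rightward to $\Re\nu=\beta+\delta$, choosing $\delta>0$ so that this line avoids $\sigma(P_\gamma)$. Passing each pole $\lambda_j$, the Laurent expansion $(\nu-P_\gamma)^{-1}\Pi_{\lambda_j}=\sum_{l=0}^{m_j-1}(P_\gamma-\lambda_j)^l\Pi_{\lambda_j}(\nu-\lambda_j)^{-l-1}$ multiplied against the Taylor expansion of $e^{-(t-1)\nu}$ at $\nu=\lambda_j$ yields the residue $\sum_{l}\tfrac{(-(t-1))^l}{l!}e^{-(t-1)\lambda_j}(P_\gamma-\lambda_j)^l\Pi_{\lambda_j}v$. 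Using the identity $e^{-P_\gamma}\Pi_{\lambda_j}=e^{-\lambda_j}\sum_k\frac{(-1)^k}{k!}(P_\gamma-\lambda_j)^k\Pi_{\lambda_j}$ (finite-dimensional functional calculus on the generalized eigenspace) and the binomial identity, these residues combine into exactly $\sum_{l=0}^{m_j-1}\tfrac{(-t)^l}{l!}e^{-t\lambda_j}(P_\gamma-\lambda_j)^l\Pi_{\lambda_j}u$, the sum subtracted in the statement.

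Finally, the remainder is the integral on $\Re\nu=\beta+\delta$; its modulus is bounded by $e^{-(\beta+\delta)(t-1)}\int_\RR\|(P_\gamma-(\beta+\delta+is))^{-1}v\|_{L^2}\,ds$, the prefactor providing the desired $e^{-\beta t}$ decay. The tail $|s|\geq R_0$ is integrable by the Eckmann--Hairer polynomial bound, while on $|s|\leq R_0$ the resolvent is uniformly bounded by a $\gamma$-dependent constant since the compact arc avoids $\sigma(P_\gamma)$; if the available decay rate $\alpha$ from Eckmann--Hairer is not $>1$, I would extract extra inverse powers of $\nu$ via iterated resolvent identities using the smoothness of $v$. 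This gives $\|(\text{remainder})\|_{L^2}\leq C_\gamma e^{-\beta t}\|u\|_{L^2}$. The main technical obstacle is this last step: packaging the non-self-adjoint resolvent estimates into an absolutely convergent contour integral with a quantitative $\gamma$-dependent constant, and making the contour-closure at $\pm i\infty$ rigorous across the transition $|\Im\nu|=R_0$. For Theorem \ref{thm:Equili}, one specializes to $\beta<\lambda_1$: then $0$ is the only eigenvalue of $\Delta_M$ in the strip, so for $\gamma$ large the only eigenvalue of $P_\gamma$ there is the simple eigenvalue $\lambda_0=0$ with $\Pi_{\lambda_0}u=\Vol_g(SM)^{-1}\int_{SM}u\,d\Vol_g$, and the expansion reduces to the stated convergence to equilibrium.
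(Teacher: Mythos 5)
Your overall strategy — Laplace inversion of the resolvent, contour deformation past finitely many eigenvalues, residue identification, and a tail estimate — is the same as the paper's, and your residue bookkeeping (working with $v=e^{-P_\gamma}u$ and then recombining via the binomial identity) is fine, if slightly roundabout. However, there is a genuine gap in the \emph{spectrum localization} step, which is precisely where the paper introduces a new argument.

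You propose to bound the spectrum in the half-plane $\{\Re\lambda\leq\beta+\delta\}$ by combining Theorem~\ref{thm:Conv} on a fixed window $\{|\Im\lambda|\leq R\}$ with the Eckmann--Hairer spectrum-free region for $|\Im\lambda|\geq R_0(\gamma,\beta)$. This is circular: the Eckmann--Hairer region (Lemma~\ref{lem:e-h}) starts only at $|\Im\lambda|\gtrsim(\Re\lambda+\gamma^{1/4})^{16}$, so $R_0(\gamma,\beta)\sim\gamma^{4}\to\infty$, while the threshold $\gamma_0(R,\ldots)$ at which Theorem~\ref{thm:Conv} becomes effective depends on $R$, so you cannot let $R$ track $R_0(\gamma)$. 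Eigenvalues in the intermediate band $R\leq|\Im\lambda|<R_0(\gamma)$ are controlled by neither tool, and since the band grows with $\gamma$, you cannot conclude the finiteness of $\sigma(P_\gamma)\cap\{\Re\lambda\leq\beta\}$ nor the proximity $|\lambda_j-\lambda_j^0|<\epsilon$. The paper closes this gap with a direct, $\gamma$-uniform argument: taking $\|u\|_{L^2}=1$, pairing $(P_\gamma-\lambda)u=0$ with $u$ in $L^2$ and $H^1$ and using $\Re(Xu,u)=0$ gives $\|\Pi^\perp u\|_{H^s}\lesssim\sqrt{\beta}\,\gamma^{-1}\|u_0\|_{H^s}$ for $s=0,1$; projecting the eigenvalue equation onto $V_0$ gives $|\lambda|\,\|u_0\|_{L^2}\lesssim\sqrt{\beta}\,\|u_0\|_{H^1}$ and projecting onto $V_1$ gives $\|u_0\|_{H^1}\lesssim\sqrt{\beta}(1+\gamma^{-2}|\lambda|)\|u_0\|_{L^2}$; combining yields $|\lambda|\lesssim\beta(1+\gamma^{-2}|\lambda|)$, hence $|\lambda|\lesssim\beta$ once $\gamma^2\gg\beta$. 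This confines all eigenvalues in the strip to a \emph{fixed} compact set independent of $\gamma$, after which Theorem~\ref{thm:Conv} applies without circularity and yields the proximity claim. You should add this step before invoking Theorem~\ref{thm:Conv}.

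A second, smaller point concerns the tail integral. On a vertical contour $\Re\nu=\beta+\delta$ the Eckmann--Hairer estimate gives only a uniform $O(1)$ bound on the resolvent, so $\int_\RR\|(P_\gamma-(\beta+\delta+is))^{-1}v\|\,ds$ does not converge absolutely without the extra gains you mention (iterated resolvent identities using the smoothness of $v$). This can be made to work, but the paper avoids the issue by bending the contour to the right, taking $\rho$ to follow the parabolic boundary $|\Im\lambda|=C_\gamma(\Re\lambda-\beta-\epsilon_0)^{16}+A_\gamma$ of the spectrum-free region once $|\Im\lambda|$ is large; along this part $\Re\lambda$ increases, so $e^{-\Re\lambda t}$ furnishes the decay and the contour integral converges directly, yielding $\lesssim_\gamma e^{-\beta t}$. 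If you keep your vertical contour, you need to carry out the regularization carefully; switching to the bent contour is cleaner and is what Lemma~\ref{lem:e-h} is designed to permit.
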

\begin{proof}
First we claim there are only finitely many eigenvalues of $P_\gamma$ in the region $\{\Re\lambda\leq\beta\}$, and they all satisfy $|\Im\lambda|\lesssim\beta$. We prove by contradiction again. Suppose $\lambda$ is an eigenvalue of $P_\gamma$ such that $\Re\lambda\leq \beta$, then there exists $u\in C^\infty(SM)$ such that
\begin{align}\label{eqn:eigen_stripe}
    P_\gamma u= c_n \gamma^2\Delta_V u-\gamma Xu =\lambda u.
\end{align}
As in the proof of Lemma \ref{inv_lem_1}, we have
\begin{align*}
    \|\Pi^\perp u\|_{H^s}\lesssim \sqrt{\beta} \gamma^{-1}\|u_0\|_{H^s},\quad s=0,1.
\end{align*}
Projecting the equation \eqref{eqn:eigen_stripe} to $V_0$ gives 
\begin{align*}
    -\gamma\Pi_0 X u_1 =\lambda u_0.
\end{align*}
Thus $|\lambda| \|u_0\|_{L^2}\lesssim \gamma\|u_1\|_{H^1}\lesssim \sqrt{\beta}\|u_0\|_{H^1}$. Projecting the equation \eqref{eqn:eigen_stripe} to $V_1$ gives
\begin{align*}
    \frac{1}{n}\gamma^2 u_1 -\gamma \Pi_1 X(u_0+u_2) =\lambda u_1
\end{align*}
which gives as before $\|u_0\|_{H^1}\lesssim \sqrt{\beta}(1+\gamma^{-2}|\lambda|)\|u_0\|_{L^2}$. We conclude
\begin{align*}
     |\lambda|\lesssim \beta (1+\gamma^{-2}|\lambda|).
\end{align*}
Taking $\gamma^2\gg \beta$, we conclude $|\lambda|\lesssim \beta$. Along with Theorem \ref{thm:Conv}, this shows $|\lambda_j-\lambda_j^0|<\epsilon$ for some $\lambda_j^0\in\sigma(\Delta_M)$ once $\gamma>\gamma_0$ is taken large enough.

Now we consider the Laplace transform of $e^{-tP_\gamma}$:
\begin{align*}
    \int_0^\infty e^{\lambda t}e^{-tP_\gamma} dt =(P_\gamma-\lambda)^{-1},\quad \Re\lambda<0.
\end{align*}
We can then express $e^{-tP_\gamma}$ as the inverse Laplace transform
\begin{align*}
    e^{-tP_\gamma}=\frac{1}{2\pi i}\int_{-1-i\infty}^{-1+i\infty}(P_\gamma-\lambda)^{-1}e^{-\lambda t}d\lambda.
\end{align*}
We deform the contour from $\Re\lambda=-1$ to $\rho$ and conclude
\begin{align}\label{eq:contour}
    e^{-tP_\gamma}=\sum\limits_{j=0}^r {\rm Res}_{\lambda=\lambda_j}((\lambda-P_\gamma)^{-1}e^{-\lambda t})+\frac{1}{2\pi i}\int_\rho (P_\gamma-\lambda)^{-1} e^{-\lambda t}d\lambda
\end{align}
where $\rho$ is given by $\{\Re\lambda=\beta+\epsilon_0,|\Im\lambda|\leq A_\gamma\}$ and $\{|\Im\lambda|= C_\gamma(\Re\lambda-\beta-\epsilon_0)^{16}+A_\gamma,\Re\lambda>\beta+\epsilon_0\}$. See Figure \ref{contour} for a picture of the contours.
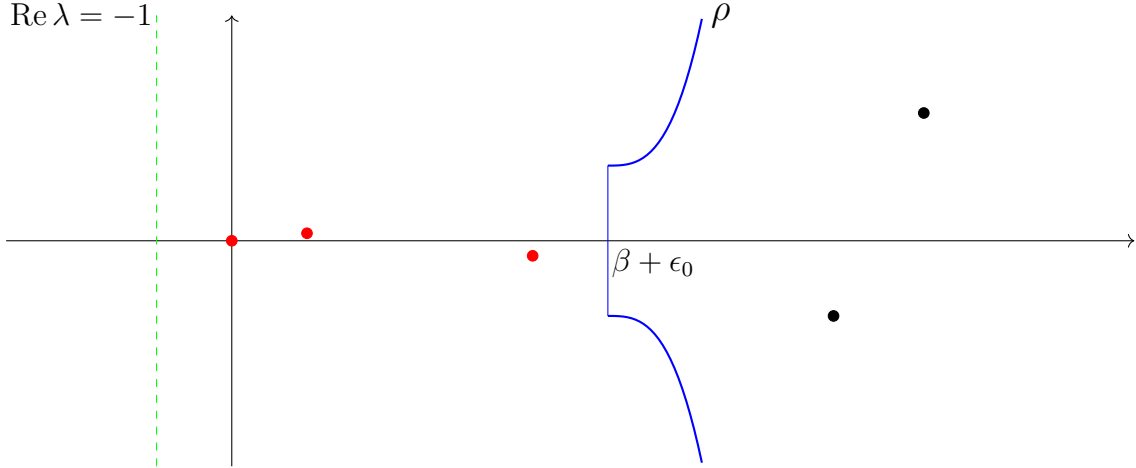
\begin{figure}
    \centering
\begin{tikzpicture}
\draw[->] (-3,0) -- (12,0);
\draw[->] (0,-3) -- (0,3);
\draw[dashed, color=green] (-1,-3) -- (-1,3);
\draw [color=blue] (5,-1) -- (5,1); 
\draw [blue,thick,domain=0:1.25] plot ({\x+5}, {1+\x^3});
\draw [blue,thick,domain=0:1.25] plot ({\x+5}, {-1-\x^3});
\filldraw[red] (0,0) circle (2pt);
\filldraw[red] (1,0.1) circle (2pt);
\filldraw[red] (4,-0.2) circle (2pt);
\filldraw[black] (8,-1) circle (2pt);
\filldraw[black] (9.2,1.7) circle (2pt);
\node at (-2,3) {$\Re\lambda=-1$};
\node at (6.5,3) {\large$\rho$};
\node at (5.6,-0.3) {$\beta+\epsilon_0$};
\end{tikzpicture}
\caption{Contour deformation}
\label{contour}
\end{figure}
In order to conclude the proof we need the following Lemma \ref{lem:e-h} from Eckmann--Hairer \cite[Theorem 4.1, 4.3]{eckmann2003spectral}.
\begin{lem}\label{lem:e-h}
There exists $C>0$ (independent of $\gamma$) such that $P_\gamma$ does not have spectrum in $\{|\Im\lambda|\geq C(\Re\lambda+\gamma^{1/4})^{16}+1, \Re\lambda>0\}$. Moreover, we have for such $\lambda$
\begin{align*}
    \|(P_\gamma-\lambda)^{-1}\|_{L^2\to L^2}\lesssim 1.
\end{align*}
\end{lem}
\begin{proof}
The lemma follows from the uniform hypoelliptic estimate \eqref{eq:unif_hypo_iy}
\begin{align*}
    \| u\|_{H^{1/8}}\leq C(B^{-1}\|(P_\gamma-iy) u\|_{L^2}+B\|u\|_{L^2}),\qquad \forall y\in\RR
\end{align*}
with constant $C>0$ independent of $y$ and $\gamma$. Taking $B=\gamma^{1/8}$, we get (using \cite[[Proposition B.1]{herau2004isotropic})
\begin{align*}
    \frac{1}{4}|\lambda+1|^{1/8}\|u\|_{L^2}^2&\leq (((P_\gamma+1)^*(P_\gamma+1))^{1/16}u,u)_{L^2}+\|(P_\gamma-\lambda)u\|_{L^2}^2\\
    &\lesssim  \gamma^{1/4}\|u\|_{H^{1/8}}^2+\|(P_\gamma-\lambda)u\|_{L^2}^2\\
    &\lesssim (\gamma^{1/4}+\Re\lambda)^2\|u\|_{L^2}^2+ \|(P_\gamma-\lambda) u\|_{L^2}^2.
\end{align*} Thus for $|\lambda+1|\geq C_1 (\gamma^{1/4}+\Re\lambda)^{16}+1$ we conclude
\[\|u\|_{L^2}\lesssim \|(P_\gamma-\lambda)u\|_{L^2}.\qedhere\]
\end{proof}
Theorem \ref{thm:Expan} then follows from \eqref{eq:contour} where the residues are given by
\begin{align*}
    {\rm Res}_{\lambda=\lambda_j}((\lambda-P_\gamma)^{-1}e^{-\lambda t})&={\rm Res}_{\lambda=\lambda_j}\left(\sum\limits_{l=0}^{m_j-1}(P_\gamma-\lambda_j)^l\Pi_{\lambda_j}(\lambda-\lambda_j)^{-l-1} e^{-\lambda t}\right)\\
    &=\sum\limits_{l=0}^{m_j-1}\frac{(-t)^l}{l!} e^{-\lambda_j t}(P_\gamma-\lambda_j)^l\Pi_{\lambda_j}
\end{align*}
and the remainder is estimated as
\[\left\|\int_\rho (P_\gamma-\lambda)^{-1} e^{-\lambda t}d\lambda\right\|_{L^2\to L^2}\lesssim_\gamma \int_\rho  e^{-\Re\lambda t}d|\lambda|\lesssim_\gamma e^{-\beta t}.\qedhere\]
\end{proof}







\printbibliography








\end{document}